\documentclass{article}

\usepackage[T1]{fontenc}
\usepackage[utf8]{inputenc}
\usepackage{lmodern}
\usepackage{geometry}
\usepackage{microtype}
\usepackage{amsmath,amssymb,amsthm,mathtools}
\usepackage{booktabs}
\usepackage{enumitem}
\usepackage{tikz}
\usepackage{pgfplots}
\usetikzlibrary{calc,positioning,arrows.meta,patterns}
\usepgfplotslibrary{fillbetween}
\pgfplotsset{compat=1.18}
\usepackage[hidelinks]{hyperref}
\IfFileExists{orcidlink.sty}{\usepackage{orcidlink}}{\newcommand{\orcidlink}[1]{\href{https://orcid.org/#1}{\texttt{#1}}}}
\usepackage{PRIMEarxiv}

\definecolor{RTwoBlue}{HTML}{1967A9}
\definecolor{RTwoOrange}{HTML}{D87914}
\definecolor{RTwoGreen}{HTML}{2E8B57}
\definecolor{RTwoPurple}{HTML}{7A4EAB}
\definecolor{RTwoGray}{HTML}{555555}

\pgfplotsset{
  colormap={whitered}{color(0cm)=(white); color(1cm)=(gray!75!blue)}
}

\newcommand{\fillconeparallelogram}[9]{%
  \pgfmathsetmacro{\dx}{#7-#1}
  \pgfmathsetmacro{\dy}{#8-#2}
  \pgfmathsetmacro{\detuv}{#3*#6-#4*#5}
  \pgfmathsetmacro{\lamu}{(\dx*#6-\dy*#5)/(\detuv)}
  \pgfmathsetmacro{\lamv}{(#3*\dy-#4*\dx)/(\detuv)}
  \coordinate (P) at (#1,#2);
  \coordinate (B) at ($(P)+({\lamu*(#3)},{\lamu*(#4)})$);
  \coordinate (D) at ($(P)+({\lamv*(#5)},{\lamv*(#6)})$);
  \coordinate (R) at (#7,#8);
  \fill[#9] (P) -- (B) -- (R) -- (D) -- cycle;
}

\newcommand{\drawconehvpanel}[8]{%
  \begin{scope}[shift={(#1,0)}]
    \draw[thick, color=black!22, dotted, step=1cm] (-0.6,-0.6) grid (5,4);
    \draw[thick, ->] (-0.6,0) -- (5.45,0) node[right] {$f_1$};
    \draw[thick, ->] (0,-0.6) -- (0,4.25) node[above] {$f_2$};
    \foreach \i in {1,2,3,4} {
      \node[anchor=north] at (\i,0) {$\i$};
      \node[anchor=east] at (0,\i) {$\i$};
    }
    \node[anchor=north east] at (0,0) {$(0,0)$};
    \node[font=\large] at (2.2,4.55) {#2};

    \coordinate (y1) at (1,3.5);
    \coordinate (y2) at (2,2);
    \coordinate (y3) at (2.5,1.5);
    \coordinate (y4) at (3.5,1);
    \coordinate (y5) at (4,0.5);
    \coordinate (r)  at (5,4);

    \pgfmathsetmacro{\ux}{cos(#3)}
    \pgfmathsetmacro{\uy}{sin(#3)}
    \pgfmathsetmacro{\vx}{cos(#4)}
    \pgfmathsetmacro{\vy}{sin(#4)}

    \fillconeparallelogram{1}{3.5}{\ux}{\uy}{\vx}{\vy}{5}{4}{gray,opacity=0.28}
    \fillconeparallelogram{2}{2}{\ux}{\uy}{\vx}{\vy}{5}{4}{gray,opacity=0.28}
    \fillconeparallelogram{2.5}{1.5}{\ux}{\uy}{\vx}{\vy}{5}{4}{gray,opacity=0.28}
    \fillconeparallelogram{3.5}{1}{\ux}{\uy}{\vx}{\vy}{5}{4}{gray,opacity=0.28}
    \fillconeparallelogram{4}{0.5}{\ux}{\uy}{\vx}{\vy}{5}{4}{gray,opacity=0.28}

    \node at (3.0,3.0) {#8};

    \fill[red]    (y1) circle (4pt) node[left] {$y^{(1)}$};
    \fill[orange] (y2) circle (4pt) node[left] {$y^{(2)}$};
    \fill[teal]   (y3) circle (4pt) node[left] {$y^{(3)}$};
    \fill[blue]   (y4) circle (4pt) node[left] {$y^{(4)}$};
    \fill[violet] (y5) circle (4pt) node[left] {$y^{(5)}$};
    \fill (r) circle (4pt) node[above] {$r$};

    \coordinate (p) at (#6,#7);
    \draw[dashed,thick] (p) -- ($(p)+(1.75*\ux,1.75*\uy)$);
    \draw[dashed,thick] (p) -- ($(p)+(1.75*\vx,1.75*\vy)$);
    \pgfmathsetmacro{\midang}{0.5*(#3+#4)}
    \draw[thick] ($(p)+(#3:0.55)$) arc[start angle=#3,end angle=#4,radius=0.55];
    \node at ($(p)+(\midang:0.85)$) {$\gamma$};

    \node at (1.1,2.7) {$Y$};
    \node at (0.9,-0.35) {#5};
  \end{scope}%
}

\numberwithin{equation}{section}

\newtheorem{theorem}{Theorem}[section]
\newtheorem{proposition}[theorem]{Proposition}

\newtheorem{corollary}[theorem]{Corollary}
\newtheorem{definition}[theorem]{Definition}

\newcommand{\R}{\mathbb{R}}
\newcommand{\E}{\mathbb{E}}
\newcommand{\Prob}{\mathbb{P}}

\newcommand{\HV}{\operatorname{HV}}
\newcommand{\HVI}{\operatorname{HVI}}
\newcommand{\EHVI}{\operatorname{EHVI}}
\newcommand{\TEHVI}{\operatorname{TEHVI}}
\newcommand{\WHV}{\operatorname{WHV}}
\newcommand{\CHV}{\operatorname{CHV}}

\newcommand{\ERII}{\operatorname{ER2I}}
\newcommand{\TSM}{\operatorname{TSM}}
\newcommand{\EI}{\operatorname{EI}}
\newcommand{\Dom}{\operatorname{Dom}}
\newcommand{\dd}{\,\mathrm{d}}

\newcommand{\DeltaM}{\Delta^{m-1}}
\newcommand{\1}{\mathbf{1}}

\title{Preference-Shaped Expected Hypervolume and R2 Improvement: Exact Computation and Monotonicity}
\author{Michael T. M. Emmerich~\orcidlink{0000-0002-7342-2090}\\
Faculty of Information Technology, University of Jyväskylä, Finland}
\date{}

\begin{document}
\maketitle

\begin{abstract}
This paper studies preference-shaped expected improvement criteria for Bayesian multiobjective optimization. We consider two indicator families which are often used for similar algorithmic purposes, but which are geometrically different. The hypervolume indicator is based on a dystopian reference point and measures dominated volume in objective space. The \(R_2\) indicator is based on a utopian point and evaluates approximation sets through weighted Tchebycheff scalarization envelopes. The purpose of the paper is to make precise which preference transformations preserve exact computation, Pareto compatibility, and monotonicity properties, and which transformations change the underlying geometry. On the hypervolume side, we revisit canonical EHVI through the particular-hypervolume-improvement (PHVI) representation of Deng et al. 2025, formulate product-density weighted EHVI in desirability coordinates, discuss cone-based EHVI as ordinary EHVI after a linear cone transformation, and separate these cases from truncated EHVI, where variance monotonicity may fail. On the \(R_2\) side, we prove that exact integral \(R_2\) improvement is not, in general, an ordinary objective-space weighted hypervolume. The obstruction is lower-dimensional: Lebesgue-density hypervolume cannot see certain boundary contributions that Tchebycheff scalarizations still detect. We then show that exact integral \(R_2\) improvement is exactly a scalarization-space volume, namely the measure of the Tchebycheff shadow between the incumbent scalarization envelope and the reference envelope. This representation yields finite-sum ER2I algorithms for discrete \(R_2\), quadrature methods for exact integral \(R_2\), and an achievement-space Gaussian surrogate formulation in which ER2I is an integral of scalar Gaussian expected improvements.
\end{abstract}

\keywords{Bayesian multiobjective optimization \and expected hypervolume improvement \and expected $R_2$ improvement \and Tchebycheff scalarization \and weighted hypervolume \and desirability functions \and cone orders \and magnitude indicator \and variance monotonicity}

\section{Introduction}
\label{sec:introduction}

Bayesian multiobjective optimization is concerned with the sequential optimization of expensive vector-valued black-box functions. Since each evaluation may be costly, one uses a probabilistic surrogate model to decide where to evaluate next. A common strategy is to transform the predictive distribution at a candidate point into the expected improvement of a set quality indicator. The quality indicator determines what kind of progress is rewarded; the probabilistic expectation determines how uncertainty is exploited. Thus, before one studies numerical algorithms, one must understand the geometry of the underlying indicator.

The first indicator geometry considered in this paper is hypervolume. For a finite approximation set, the hypervolume indicator measures the size of the objective-space region dominated by the set and bounded by a reference point. It is Pareto-compliant and has a clear geometric interpretation. Its expected-improvement version, EHVI, was introduced in Emmerich's doctoral dissertation and subsequently developed in Gaussian-process-assisted evolutionary optimization \cite{Emmerich2005,EmmerichGiannakoglouNaujoks2006}. In EHVI, the deterministic hypervolume improvement of a candidate objective vector is averaged with respect to the predictive distribution. This gives an acquisition function which combines set quality and uncertainty in a principled way.

The second indicator geometry is the \(R_2\) geometry. The original \(R_2\) indicator of Hansen and Jaszkiewicz evaluates approximation sets by applying scalarizing functions for a finite collection of weights \cite{HansenJaszkiewicz1994}. Later work studied its indicator properties in detail \cite{BrockhoffWagnerTrautmann2012}. In contrast to hypervolume, \(R_2\) is naturally anchored at a utopian or ideal point and works through scalarization values rather than through dominated objective-space volume. Expected \(R_2\)-indicator improvement was introduced for Bayesian multiobjective optimization by Deutz, Emmerich, and Yang \cite{DeutzEmmerichYang2019}. More recent exact-integral variants replace a finite weight set by integration over the weight simplex and thereby recover stronger Pareto-compliance properties \cite{SchaepermeierKerschke2024,JaszkiewiczZielniewicz2024}.

The present paper compares these two geometries under preference shaping. Preference information may enter through density weights, desirability maps, cone orders, truncation to a region of interest, scalarizing weights, or scalarization functions. However, a preference-shaped acquisition should not be judged only by its intuitive modelling appeal. It should also be examined structurally. Following the expected-improvement analysis of Wagner, Emmerich, Deutz, and Ponweiser \cite{WagnerEmmerichDeutzPonweiser2010}, we use three guiding questions:
\begin{enumerate}[label=(G\arabic*),leftmargin=1.0cm]
\item \emph{Computation.} Does the shaped expected improvement admit an exact formula, a reduction to known hypervolume computations, or a controlled numerical integration scheme?
\item \emph{Compatibility.} Does the construction preserve Pareto compatibility, or the appropriate preference-order analogue?
\item \emph{Monotonicity.} Is the acquisition monotone with respect to favourable changes of the predictive mean and, where appropriate, with respect to increasing predictive variance?
\end{enumerate}
These questions are deliberately elementary. They are also restrictive enough to reveal important differences between superficially similar acquisition functions.

On the hypervolume side, the central computational observation is the PHVI representation theorem, due to Deng, Sun, Zhang, and Li: for independent Gaussian predictive coordinates, EHVI is a particular ordinary hypervolume improvement after coordinate-wise expected-improvement transformations \cite{DengSunZhangLi2025}. This theorem is more than a computational trick. It explains why exact hypervolume algorithms and monotonicity arguments can be transferred to EHVI. It also provides a useful test for preference transformations: if the transformation leads back to ordinary hypervolume geometry, much of the existing theory remains available.

On the \(R_2\) side, the situation is more subtle. Although \(R_2\)-based methods have been used to approximate hypervolume contributions \cite{ShangIshibuchiNi2018}, exact integral \(R_2\) improvement is not ordinary objective-space weighted hypervolume. The reason is not a matter of notation but of dimension. An objective-space hypervolume density with respect to Lebesgue measure cannot assign positive value to lower-dimensional boundary regions. Tchebycheff scalarizations, however, may still register improvement there. The magnitude indicator of dominated sets partly addresses such lower-dimensional visibility by adding coordinate-projection terms \cite{Emmerich2026Magnitude}, but standard metric magnitude still does not provide a bijection with exact \(R_2\). The correct representation is instead a scalarization-envelope representation: for every weight, one measures the interval between the incumbent scalarization envelope and the reference envelope. The union of these intervals over the weight simplex is what we call the Tchebycheff shadow.

The scope of the paper is therefore as follows. On the EHVI side, we discuss canonical EHVI, product-density and desirability transformations, cone-based transformations, and truncation to a region of interest. On the \(R_2\) side, we discuss exact scalarization-space volume, the obstruction to objective-space weighted hypervolume, metric magnitude, Tchebycheff shadows, and ER2I computation with achievement-space Gaussian surrogates. The main contributions are:
\begin{enumerate}[label=(C\arabic*),leftmargin=1.0cm]
\item We give a unified account of canonical EHVI through the PHVI representation and record the resulting dimension-free monotonicity interpretation.
\item We formulate product-density weighted EHVI as ordinary hypervolume in desirability coordinates and state the corresponding positive-kernel condition for Pareto compliance.
\item We distinguish cone transformations, which preserve ordinary EHVI structure after a linear change of coordinates, from truncation, which focuses preferences but may destroy variance monotonicity.
\item We prove that exact integral \(R_2\) improvement is not, in general, ordinary objective-space weighted hypervolume.
\item We show that exact integral \(R_2\) improvement is exactly the scalarization-space volume of a Tchebycheff shadow.
\item We derive ER2I computation rules: finite scalar expected-improvement sums for discrete \(R_2\), quadrature over the weight simplex for exact integral \(R_2\), and an achievement-space Gaussian formulation in which the integrand is scalar Gaussian EI.
\end{enumerate}

The rest of the paper is organized as follows. Section~\ref{sec:prelim} fixes the minimization convention, introduces hypervolume improvement, EHVI, discrete \(R_2\), exact integral \(R_2\), and their corresponding expected-improvement forms. Section~\ref{sec:phvi} recalls canonical EHVI and the PHVI representation that turns EHVI into ordinary HVI after coordinate-wise EI transformations. Section~\ref{sec:preference-ehvi} studies three preference mechanisms for EHVI: product-density/desirability transformations, cone transformations, and truncation to a region of interest. Section~\ref{sec:r2-volume} proves the scalarization-space representation of exact integral \(R_2\) improvement and the obstruction to ordinary objective-space weighted hypervolume. Section~\ref{sec:magnitude-shadows} compares this obstruction with magnitude-type lower-dimensional terms and introduces Tchebycheff shadows. Section~\ref{sec:er2i-surrogates} derives ER2I formulas for achievement-space Gaussian surrogates, while Section~\ref{sec:algorithms} translates the formulas into discrete-weight and quadrature-based algorithms. Section~\ref{sec:conclusions} summarizes the results using the three guiding modes. The appendices collect the proofs and a list of symbols.

\section{Notation and preliminaries}
\label{sec:prelim}

We first fix notation. A minimization convention is used throughout, because it makes the relation between Pareto dominance, reference points, and Tchebycheff scalarizations explicit. The notation is deliberately uniform: hypervolume quantities are written as volumes of dominated regions, while \(R_2\) quantities are written as integrals of scalarization envelopes. This common notation makes the later comparison possible without changing conventions between sections.

We write objective vectors in minimization orientation unless stated otherwise. For a finite approximation set $A\subset\R^m$ and reference point $r$, the dominated region is
\[
\Dom_r(A)=\bigcup_{a\in A} [a_1,r_1]\times\cdots\times [a_m,r_m].
\]
The hypervolume indicator is $\HV_r(A)=\int_{\Dom_r(A)}1\dd y$. A weighted hypervolume with density $K$ is
\[
\WHV_{K,r}(A)=\int_{\Dom_r(A)}K(y)\dd y.
\]
For a new point $y$, hypervolume improvement is
\[
I_A(y)=\HVI_r(\{y\},A)=\HV_r(A\cup\{y\})-\HV_r(A),
\]
and for a random predictive vector $Y$, $\EHVI_A=\E[I_A(Y)]$. Equivalently, if the predictive distribution at a design $x$ has density $p_x$, then canonical EHVI is the integral
\[
\EHVI_A(x)=\int_{\Omega_r} I_A(y)\,p_x(y)\dd y.
\]
Thus EHVI is the expectation of deterministic hypervolume improvement, not a different deterministic indicator. In the Gaussian independent-coordinate case considered below, this integral can be evaluated exactly by the PHVI representation: the expectation is absorbed into coordinate-wise EI transforms and the remaining computation is ordinary hypervolume improvement. Figures~\ref{fig:ehvi-hvi-2d} and~\ref{fig:ehvi-predictive-distribution} illustrate these two levels: deterministic HVI for a realized objective vector and EHVI as its average over a predictive density.

For the $R_2$ side, let $z^+$ be a utopian point and $\lambda\in\DeltaM=\{\lambda\in\R_+^m:\sum_i\lambda_i=1\}$. The weighted Tchebycheff scalarization is
\[
g_\lambda(y;z^+)=\max_i \lambda_i(y_i-z_i^+).
\]
For a finite set $A$, define the scalarization envelope
\[
h_A(\lambda)=\min_{a\in A}g_\lambda(a;z^+).
\]
We use two $R_2$ variants. The discrete $R_2$ indicator is based on a finite weight set $\Lambda_K=\{\lambda^{(1)},\ldots,\lambda^{(K)}\}$ and evaluates
\[
R_{2,K}(A)=\frac{1}{K}\sum_{k=1}^K h_A(\lambda^{(k)}),
\qquad
I_{R_{2,K}}(A;r)=\frac{1}{K}\sum_{k=1}^K\bigl(h_r(\lambda^{(k)})-h_A(\lambda^{(k)})\bigr)_+.
\]
Its expected-improvement version is the discrete ER2I acquisition
\[
\ERII_{K,A}(x)=\frac{1}{K}\sum_{k=1}^K
\E\bigl[(h_A(\lambda^{(k)})-g_{\lambda^{(k)}}(Y(x);z^+))_+\bigr].
\]
The exact integral $R_2$ value replaces the finite weight set by integration over the simplex,
\[
R_2(A)=\int_{\DeltaM} h_A(\lambda)\rho(\lambda)\dd\sigma(\lambda),
\]
where $\rho$ is a nonnegative weight density. With a reference point $r$, the integral improvement form is
\[
I_{R_2}(A;r)=\int_{\DeltaM}\bigl(h_r(\lambda)-h_A(\lambda)\bigr)_+\rho(\lambda)\dd\sigma(\lambda),
\]
and its expected-improvement version, called exact or integral ER2I, is obtained by taking the predictive expectation of the integrand. Figures~\ref{fig:r2-objective-view} and~\ref{fig:r2-weight-view} contrast the objective-space Tchebycheff construction with the corresponding weight-space envelope; Figure~\ref{fig:tsm-2d-map} previews the scalarization-space shadow used later to represent exact integral $R_2$ improvement.

\begin{figure}[t]
\centering
\begin{tikzpicture}[scale=0.95]
  \draw[thick, color=black!22, dotted, step=1cm] (-0.6,-0.6) grid (5,4);
  \draw[thick, ->] (-0.6,0) -- (5.45,0) node[right] {$f_1$};
  \draw[thick, ->] (0,-0.6) -- (0,4.25) node[above] {$f_2$};
  \foreach \i in {1,2,3,4} {
    \node[anchor=north] at (\i,0) {$\i$};
    \node[anchor=east] at (0,\i) {$\i$};
  }
  \node[anchor=north east] at (0,0) {$(0,0)$};

  \fill[gray,opacity=0.28] (1,3.5) -- (1,4) -- (5,4) -- (5,3.5) -- cycle;
  \fill[gray,opacity=0.28] (2,2.5) -- (2,3.5) -- (5,3.5) -- (5,2.5) -- cycle;
  \fill[gray,opacity=0.28] (3,1.5) -- (3,2.5) -- (5,2.5) -- (5,1.5) -- cycle;

  \fill[red,opacity=0.20] (2.45,1.05) -- (2.45,2.5) -- (3,2.5) -- (3,1.5) -- (5,1.5) -- (5,1.05) -- cycle;
  \draw[red!70!black,thick,dashed] (2.45,1.05) -- (2.45,2.5) -- (3,2.5) -- (3,1.5) -- (5,1.5) -- (5,1.05) -- cycle;

  \draw[black,ultra thick] (1,4) -- (1,3.5) -- (2,3.5) -- (2,2.5) -- (3,2.5) -- (3,1.5) -- (4,1.5);
  \draw[black,dashed] (2.45,1.05) -- (5,1.05) -- (5,4);

  \fill[red] (1,3.5) circle (4pt) node[left] {$y^{(1)}$};
  \fill[orange] (2,2.5) circle (4pt) node[left] {$y^{(2)}$};
  \fill[teal] (3,1.5) circle (4pt) node[left] {$y^{(3)}$};
  \fill[blue] (2.45,1.05) circle (4pt) node[below left] {$y$};
  \fill (5,4) circle (4pt) node[above] {$r$};

  \node at (3.6,3.25) {$\HV_r(A)$};
  \node[red!70!black] at (3.85,1.25) {$I_A(y)$};
\end{tikzpicture}
\caption{Hypervolume improvement in two objectives. The grey rectangles indicate the hypervolume already dominated by the incumbent approximation set $A=\{y^{(1)},y^{(2)},y^{(3)}\}$ and bounded by the reference point $r$. The red shaded region is the additional hypervolume improvement $I_A(y)$ contributed by a newly realized candidate vector $y$. The orientation shown is the classical minimization orientation; the maximization convention in the text is obtained by reversing signs.}
\label{fig:ehvi-hvi-2d}
\end{figure}

\begin{figure}[t]
\centering
\begin{tikzpicture}[
  scale=1,
  declare function={muone=2;},
  declare function={mutwo=1.5;},
  declare function={sigmaone=0.7;},
  declare function={sigmatwo=0.6;},
  declare function={normal(\m,\s)=1/(\s*sqrt(2*pi))*exp(-((x-\m)^2)/(2*\s^2));},
  declare function={bivar(\ma,\sa,\mb,\sb)=1/(2*pi*\sa*\sb)*exp(-0.5*(((x-\ma)^2)/(\sa^2)+((y-\mb)^2)/(\sb^2)));}
]
\begin{axis}[
  colormap name=whitered,
  width=0.75\textwidth,
  height=0.52\textwidth,
  view={45}{65},
  enlargelimits=false,
  grid=major,
  domain=0:4,
  y domain=0:4,
  samples=24,
  xlabel={$y_1$},
  ylabel={$y_2$},
  zlabel={$p_{\mu,\sigma}(y_1,y_2)$},
  colorbar,
  colorbar style={
    at={(1,1)},
    anchor=north west,
    height=0.20*\pgfkeysvalueof{/pgfplots/parent axis height},
    title={$p_{\mu,\sigma}$}
  }
]

\draw[black,ultra thick] (axis cs:1,4,0) -- (axis cs:1,3.5,0) -- (axis cs:2,3.5,0) -- (axis cs:2,2.5,0) -- (axis cs:3,2.5,0) -- (axis cs:3,1.5,0) -- (axis cs:4,1.5,0);
\draw[black,dashed,fill=gray!40,opacity=0.6] (axis cs:4,4,0) -- (axis cs:1,4,0) -- (axis cs:1,3.5,0) -- (axis cs:2,3.5,0) -- (axis cs:2,2.5,0) -- (axis cs:3,2.5,0) -- (axis cs:3,1.5,0) -- (axis cs:4,1.5,0) -- cycle;

\addplot3[surf,opacity=0.50] {bivar(muone,sigmaone,mutwo,sigmatwo)};
\addplot3[domain=-0.5:4,samples=31,samples y=0,thick,smooth] (x,4,{normal(muone,sigmaone)});
\addplot3[domain=-0.5:4,samples=31,samples y=0,thick,smooth] (-1,x,{normal(mutwo,sigmatwo)});

\draw[black!50] (axis cs:-1,0,0) -- (axis cs:4,0,0);
\draw[black!50,dashed] (axis cs:1,0,0) -- (axis cs:1,4,0);
\draw[black!50,dashed] (axis cs:2,0,0) -- (axis cs:2,4,0);
\draw[black!50,dashed] (axis cs:3,0,0) -- (axis cs:3,4,0);
\draw[black!50,dashed] (axis cs:0,3.5,0) -- (axis cs:4,3.5,0);
\draw[black!50,dashed] (axis cs:0,2.5,0) -- (axis cs:4,2.5,0);
\draw[black!50,dashed] (axis cs:0,1.5,0) -- (axis cs:4,1.5,0);
\draw[black!50] (axis cs:0,-1,0) -- (axis cs:0,4,0);

\addplot3[only marks,mark=*,mark size=2pt,color=red] coordinates {(muone,mutwo,0)};
\node at (axis cs:2,1.5,0) [pin=-120:{$\hat y(x)$}] {};
\draw[thick,gray,dashed] (axis cs:2,1.5,0) ellipse [x radius=sigmaone,y radius=sigmatwo,rotate=0];
\node at (axis cs:2.7,1.5,0) [pin=30:{$\hat s(x)$}] {};

\node at (axis cs:4,4,0) [pin=-180:{$r$}] {};
\node at (axis cs:1,3.5,0) [pin=0:{$y^{(1)}$}] {};
\node at (axis cs:2,2.5,0) [pin=0:{$y^{(2)}$}] {};
\node at (axis cs:3,1.5,0) [pin=0:{$y^{(3)}$}] {};

\end{axis}
\end{tikzpicture}
\caption{Illustration of a predictive distribution behind EHVI in two objectives. The incumbent front approximation is $\{y^{(1)},y^{(2)},y^{(3)}\}$; the grey region is the reference-bounded hypervolume. The red point is the predictive mean $\hat y(x)$ and the dashed ellipse indicates the coordinate-wise standard deviations $\hat s(x)$. EHVI integrates the hypervolume improvement over the predictive density surface.}
\label{fig:ehvi-predictive-distribution}
\end{figure}

\begin{figure}[t]
\centering
\begin{minipage}{\textwidth}
\begin{tikzpicture}
\begin{axis}[
  width=\linewidth,
  height=0.36\textwidth,
  xmin=-5, xmax=55,
  ymin=-0.25, ymax=1.25,
  axis lines=left,
  xlabel={$y_i$},
  ylabel={$D_i(y_i)$},
  grid=both,
  grid style={line width=.1pt, draw=gray!18},
  major grid style={line width=.2pt, draw=gray!32},
  xtick={5,30},
  xticklabels={$a_i$,$r_i$},
  ytick={-0.2,0,1,1.2},
  tick label style={font=\scriptsize},
  label style={font=\small},
  clip=false
]
  \addplot[draw=none, fill=RTwoGreen!10] coordinates {(-5,-0.25) (5,-0.25) (5,1.25) (-5,1.25)} \closedcycle;
  \addplot[draw=none, fill=RTwoOrange!10] coordinates {(5,-0.25) (30,-0.25) (30,1.25) (5,1.25)} \closedcycle;
  \addplot[draw=none, fill=red!7] coordinates {(30,-0.25) (55,-0.25) (55,1.25) (30,1.25)} \closedcycle;

  \addplot[very thick, RTwoBlue, domain=-5:5, samples=80] {1/(x-10)+1.2};
  \addplot[very thick, RTwoBlue, domain=5:30, samples=80] {1.2-0.04*x};
  \addplot[very thick, RTwoBlue, domain=30:55, samples=80] {1/(x-25)-0.2};

  \draw[dashed, thick] (axis cs:5,-0.25) -- (axis cs:5,1.25);
  \draw[dashed, thick] (axis cs:30,-0.25) -- (axis cs:30,1.25);
  \addplot[only marks, mark=*, mark size=2pt, RTwoBlue] coordinates {(5,1) (30,0)};
  \node[font=\scriptsize, anchor=south west] at (axis cs:-4,1.15) {overachievement};
  \node[font=\scriptsize, anchor=south] at (axis cs:17.5,0.45) {desirable range};
  \node[font=\scriptsize, anchor=south east] at (axis cs:54,-0.12) {underachievement};
  \node[font=\scriptsize, anchor=west] at (axis cs:6,1.02) {$D_i(a_i)=1$};
  \node[font=\scriptsize, anchor=west] at (axis cs:31,0.04) {$D_i(r_i)=0$};
\end{axis}
\end{tikzpicture}
\end{minipage}

\caption{D-PHI desirability function with aspiration level \(a_i\), reservation level \(r_i\), and marginal product-kernel interpretation \cite{LiangShavazipourSainiEmmerich2026}. The D-PHI construction uses aspiration and reservation levels to define coordinate-wise desirability maps
\(
T_i(y_i)=D_i(y_i),\qquad D_i(a_i)=1,\qquad D_i(r_i)=0.
\)
The local slopes \(D_i'\) define marginal product-kernel factors and hence shape weighted hypervolume improvement.}
\label{fig:dphi-desirability}
\end{figure}

\begin{figure}[t]
\centering
\begin{tikzpicture}[scale=1]
  \drawconehvpanel{0}{Right cone (Pareto)}{0}{90}{$\gamma=\pi/4$}{2}{2}{$Y\oplus \mathbb{R}^2_{\succ \mathbf{0}}$}
  \drawconehvpanel{6.8}{Obtuse cone}{-15}{105}{$\gamma>\pi/4$}{2}{2}{$Y\oplus C_{\mathrm{obtuse}}$}
\end{tikzpicture}
\caption{Two-dimensional illustration of cone-based hypervolume regions. The left panel shows the classical right cone corresponding to Pareto dominance, while the right panel shows an obtuse cone. In both panels, the shaded region is the cone-dominated region generated by the sample set $Y=\{y^{(1)},\ldots,y^{(5)}\}$ and bounded by the reference point $r$. The opening angle $\gamma$ determines the underlying preference geometry.}
\label{fig:cone-hv-2d}
\end{figure}

\begin{figure}[t]
\centering
\begin{tikzpicture}
\begin{axis}[
  width=0.86\textwidth,
  height=0.42\textwidth,
  axis lines=left,
  xmin=0, xmax=1.02,
  ymin=0, ymax=1.02,
  xlabel={$f_1$}, ylabel={$f_2$},
  xlabel style={at={(axis description cs:1,0)},anchor=west},
  ylabel style={at={(axis description cs:0,1)},anchor=south},
  grid=both,
  grid style={line width=.1pt, draw=gray!18},
  major grid style={line width=.2pt, draw=gray!32},
  tick label style={font=\small},
  label style={font=\small},
  clip=false
]
  \addplot[RTwoGray, thick, domain=0:1, samples=200] ({x^2},{(1-x)^2});

  \draw[RTwoPurple, very thick] (axis cs:0,0.4444444444) -- (axis cs:0.1111111111,0.4444444444) -- (axis cs:0.1111111111,0);
  \draw[RTwoOrange, very thick] (axis cs:0,0.2500000000) -- (axis cs:0.2500000000,0.2500000000) -- (axis cs:0.2500000000,0);
  \draw[RTwoGreen, very thick] (axis cs:0,0.1111111111) -- (axis cs:0.4444444444,0.1111111111) -- (axis cs:0.4444444444,0);

  \addplot[RTwoPurple!70, densely dotted] coordinates {(0,0) (0.1111111111,0.4444444444)};
  \addplot[RTwoOrange!85, densely dotted] coordinates {(0,0) (0.2500000000,0.2500000000)};
  \addplot[RTwoGreen!75!black, densely dotted] coordinates {(0,0) (0.4444444444,0.1111111111)};

  \addplot[only marks, mark=*, mark size=2.4pt, RTwoPurple] coordinates {(0.1111111111,0.4444444444)};
  \addplot[only marks, mark=*, mark size=2.4pt, RTwoOrange] coordinates {(0.2500000000,0.2500000000)};
  \addplot[only marks, mark=*, mark size=2.4pt, RTwoGreen!75!black] coordinates {(0.4444444444,0.1111111111)};

  \node[font=\small, fill=white, inner sep=1.2pt, anchor=east] at (axis cs:0.095,0.475) {$\lambda=0.8$, $a=(\frac19,\frac49)$};
  \node[font=\small, fill=white, inner sep=1.2pt, anchor=west] at (axis cs:0.31,0.305) {$\lambda=0.5$, $a=(\frac14,\frac14)$};
  \node[font=\small, fill=white, inner sep=1.2pt, anchor=west] at (axis cs:0.505,0.145) {$\lambda=0.2$, $a=(\frac49,\frac19)$};
  \node[font=\small, fill=white, inner sep=1.2pt, anchor=south west] at (axis cs:0.57,0.69) {exact Pareto front $\mathcal P$};
  \node[font=\small, anchor=north east] at (axis cs:0,0) {$z^\star$};
\end{axis}
\end{tikzpicture}
\caption{Objective-space view of the weighted Tchebycheff construction for the exact front $\mathcal P$. The three highlighted weights are representative examples. For each weight, the best front point is the point where the weighted rectangle touches the Pareto front.}
\label{fig:r2-objective-view}
\end{figure}

\begin{figure}[t]
\centering
\begin{tikzpicture}
\begin{axis}[
  width=0.86\textwidth,
  height=0.42\textwidth,
  axis lines=left,
  xmin=0, xmax=1,
  ymin=0, ymax=0.17,
  xlabel={$\lambda$ in $(\lambda,1-\lambda)$}, ylabel={$r_{\mathcal P}(\lambda)$},
  xlabel style={at={(axis description cs:1,0)},anchor=west},
  ylabel style={at={(axis description cs:0,1)},anchor=south},
  xtick={0,0.2,0.4,0.6,0.8,1.0},
  ytick={0,0.04,0.08,0.12,0.16},
  grid=both,
  grid style={line width=.1pt, draw=gray!18},
  major grid style={line width=.2pt, draw=gray!32},
  tick label style={font=\small},
  label style={font=\small},
  clip=false
]
  \addplot[draw=none, fill=RTwoBlue!18, domain=0:1, samples=250]
    {x*(1-x)/(sqrt(x)+sqrt(1-x))^2} \closedcycle;
  \addplot[RTwoBlue, very thick, domain=0:1, samples=250]
    {x*(1-x)/(sqrt(x)+sqrt(1-x))^2};

  \draw[gray!65, densely dotted] (axis cs:0.2,0) -- (axis cs:0.2,0.0888888889);
  \draw[gray!65, densely dotted] (axis cs:0.5,0) -- (axis cs:0.5,0.1250000000);
  \draw[gray!65, densely dotted] (axis cs:0.8,0) -- (axis cs:0.8,0.0888888889);

  \addplot[only marks, mark=*, mark size=2.5pt, RTwoGreen!75!black] coordinates {(0.2,0.0888888889)};
  \addplot[only marks, mark=*, mark size=2.5pt, RTwoOrange] coordinates {(0.5,0.1250000000)};
  \addplot[only marks, mark=*, mark size=2.5pt, RTwoPurple] coordinates {(0.8,0.0888888889)};

  \node[font=\small, fill=white, inner sep=1.2pt, anchor=south east] at (axis cs:0.18,0.101) {$\lambda=0.2$};
  \node[font=\small, fill=white, inner sep=1.2pt, anchor=south] at (axis cs:0.5,0.138) {$\lambda=0.5$};
  \node[font=\small, fill=white, inner sep=1.2pt, anchor=south west] at (axis cs:0.82,0.101) {$\lambda=0.8$};

  \node[font=\small, fill=white, inner sep=1.2pt, anchor=west] at (axis cs:0.58,0.137)
    {exact envelope $r_{\mathcal P}(\lambda)$};
  \node[font=\small, fill=white, inner sep=1.2pt, anchor=west] at (axis cs:0.08,0.028)
    {shaded area $=R_2^{\mathrm{exact}}(\mathcal P)$};
  \node[font=\small, fill=white, inner sep=1.2pt, anchor=north west] at (axis cs:0.015,0.165)
    {$R_2^{\mathrm{exact}}(\mathcal P)\approx 0.08904862$};
\end{axis}
\end{tikzpicture}
\caption{Weight-space view of the exact $R_2$ construction. The function $r_{\mathcal P}(\lambda)$ is the best weighted Tchebycheff value attainable on the exact front. The integrated exact $R_2$ value is the area under this curve.}
\label{fig:r2-weight-view}
\end{figure}

\begin{figure}[t]
\centering
\begin{tikzpicture}
\begin{axis}[
  name=obj,
  width=0.47\textwidth,
  height=0.42\textwidth,
  xmin=0,xmax=1.05,ymin=0,ymax=1.05,
  axis lines=left,
  xlabel={$f_1$},
  ylabel={$f_2$},
  grid=both,
  tick label style={font=\scriptsize},
  label style={font=\small},
  title={Objective space: four nondominated points},
  title style={font=\small},
  clip=false
]
\addplot[only marks, mark=*, mark size=2.2pt] coordinates {
  (0.12,0.88)
  (0.30,0.55)
  (0.55,0.32)
  (0.84,0.16)
};
\node[anchor=south west,font=\scriptsize] at (axis cs:0.12,0.88) {$a^{(1)}$};
\node[anchor=south west,font=\scriptsize] at (axis cs:0.30,0.55) {$a^{(2)}$};
\node[anchor=south west,font=\scriptsize] at (axis cs:0.55,0.32) {$a^{(3)}$};
\node[anchor=south west,font=\scriptsize] at (axis cs:0.84,0.16) {$a^{(4)}$};
\addplot[only marks, mark=square*, mark size=2.2pt] coordinates {(1,1)};
\node[anchor=south west,font=\scriptsize] at (axis cs:1,1) {$r$};
\addplot[only marks, mark=triangle*, mark size=2.2pt] coordinates {(0,0)};
\node[anchor=north east,font=\scriptsize] at (axis cs:0,0) {$z^+$};
\draw[dashed] (axis cs:0.12,0.88) -- (axis cs:0.30,0.55) -- (axis cs:0.55,0.32) -- (axis cs:0.84,0.16);
\end{axis}

\begin{axis}[
  at={(obj.east)},
  xshift=1.2cm,
  anchor=west,
  width=0.47\textwidth,
  height=0.42\textwidth,
  xmin=0,xmax=1,ymin=0,ymax=1.05,
  axis lines=left,
  xlabel={$\lambda$},
  ylabel={$t$},
  grid=both,
  tick label style={font=\scriptsize},
  label style={font=\small},
  title={Lifted HV: Tchebycheff shadow region},
  title style={font=\small},
  clip=false
]
\addplot[name path=ref, thick, domain=0:1, samples=200] {max(x,1-x)};
\addplot[name path=env, very thick, domain=0:1, samples=300]
{min(min(max(0.12*x,0.88*(1-x)),max(0.30*x,0.55*(1-x))),min(max(0.55*x,0.32*(1-x)),max(0.84*x,0.16*(1-x))))};
\addplot[fill=blue!18, draw=none] fill between[of=ref and env, soft clip={domain=0:1}];
\addplot[thick, domain=0:1, samples=200] {max(x,1-x)};
\addplot[very thick, domain=0:1, samples=300]
{min(min(max(0.12*x,0.88*(1-x)),max(0.30*x,0.55*(1-x))),min(max(0.55*x,0.32*(1-x)),max(0.84*x,0.16*(1-x))))};
\node[anchor=south,font=\scriptsize] at (axis cs:0.22,0.78) {$h_r(\lambda)$};
\node[anchor=north,font=\scriptsize] at (axis cs:0.62,0.25) {$h_A(\lambda)$};
\node[font=\scriptsize] at (axis cs:0.50,0.42) {TSM area};
\end{axis}
\end{tikzpicture}
\caption{Two-dimensional mapping from Tchebycheff $R_2$ to a lifted hypervolume-like shadow region. Left: four mutually nondominated objective-space points. Right: the reference envelope $h_r$ and the attained envelope $h_A$ in scalarization space. The shaded region is the Tchebycheff shadow magnitude and equals exact integral $R_2$ improvement.}
\label{fig:tsm-2d-map}
\end{figure}

\section{Canonical EHVI and the PHVI representation}
\label{sec:phvi}

We next recall canonical EHVI and state the representation that serves as the starting point for the hypervolume part of the paper. The main point is that EHVI is not merely analogous to hypervolume improvement; under independent Gaussian predictive coordinates it can be written as an ordinary hypervolume improvement after a deterministic coordinate transformation. This observation justifies transferring known properties of HVI to EHVI with some care.

Canonical EHVI is the expected value of deterministic hypervolume improvement under a predictive distribution.

\begin{theorem}[PHVI representation of EHVI]
For independent Gaussian predictive coordinates and a rectangular reference orientation, EHVI can be written as an ordinary hypervolume improvement after applying coordinate-wise expected-improvement transformations. In particular, in the normalized maximization case with reference zero,
\[
\tilde r_j=\EI(0;\mu_j,\sigma_j^2),\qquad
\tilde a_j^{(i)}=\tilde r_j-\EI(a_j^{(i)};\mu_j,\sigma_j^2),
\]
and
\[
\EHVI_A(\mu,\sigma)=\HVI(\{\tilde r\},\tilde A).
\]
\end{theorem}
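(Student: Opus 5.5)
The plan is to write hypervolume improvement as the integral of an indicator function and move the expectation inside with Tonelli. Independence then turns the integrand into a product of one-dimensional tail probabilities. A coordinate-wise change of variables whose Jacobian is exactly that product gives the result.

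\textbf{Setup.} Work in the normalized maximization setting with reference point $0$. Take $A=\{a^{(1)},\dots,a^{(n)}\}\subset\R_{\ge 0}^m$; coordinates below $0$ can be clipped without changing any dominated region. Let $Y=(Y_1,\dots,Y_m)$ with independent $Y_j\sim N(\mu_j,\sigma_j^2)$ and $\sigma_j>0$. For a realization $y$, write $D(A)=\bigcup_i[0,a^{(i)}]$. Then
\[
I_A(y)=\int_{\R_{\ge0}^m}\1[z\le y]\,\bigl(1-\1[z\in D(A)]\bigr)\dd z .
\]
The integrand is nonnegative, so Tonelli lets us exchange expectation and integral:
\[
\EHVI_A=\int_{\R_{\ge0}^m\setminus D(A)}\Prob(Y\ge z)\dd z=\int_{\R_{\ge0}^m\setminus D(A)}\prod_{j=1}^m\Prob(Y_j\ge z_j)\dd z ,
\]
where the last equality uses independence.

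\textbf{Change of variables.} In maximization orientation, $\EI(t;\mu_j,\sigma_j^2)=\E[(Y_j-t)_+]$, and $\frac{d}{dt}\EI(t;\mu_j,\sigma_j^2)=-\Prob(Y_j\ge t)$. Define
\[
\psi_j(t)=\EI(0;\mu_j,\sigma_j^2)-\EI(t;\mu_j,\sigma_j^2)=\int_0^t\Prob(Y_j\ge s)\dd s .
\]
Since $\sigma_j>0$, $\psi_j$ is a $C^1$ strictly increasing bijection from $[0,\infty)$ onto $[0,\tilde r_j)$, because $\EI(t)\to0$ as $t\to\infty$. Its derivative is $\Prob(Y_j\ge t)>0$.

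Let $\Psi=(\psi_1,\dots,\psi_m)$. This is a diffeomorphism of the open orthant onto the open box $(0,\tilde r)$, with Jacobian determinant $\prod_j\Prob(Y_j\ge z_j)$. The change-of-variables formula therefore gives
\[
\EHVI_A=\bigl|\Psi(\R_{\ge0}^m\setminus D(A))\bigr| ,
\]
where $|\cdot|$ is Lebesgue measure. Boundaries are null sets and can be ignored.

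\textbf{Identifying the image.} Each $\psi_j$ is increasing, so $\Psi$ preserves the componentwise order and maps boxes to boxes: $\Psi([0,a^{(i)}])=[0,\Psi(a^{(i)})]$. By definition $\psi_j(a_j^{(i)})=\tilde a_j^{(i)}$. Hence, up to null sets,
\[
\Psi\bigl(\R_{\ge0}^m\setminus D(A)\bigr)=[0,\tilde r]\setminus\bigcup_i[0,\tilde a^{(i)}] .
\]
This set is exactly the region dominated by the single point $\tilde r$ and not by $\tilde A$, with reference $0$. Its volume is $\HV(\tilde A\cup\{\tilde r\})-\HV(\tilde A)=\HVI(\{\tilde r\},\tilde A)$, which proves the theorem.

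\textbf{Main obstacle.} The technical content is small, and no finiteness assumption is needed because every quantity is nonnegative. The step needing most care is conventions. One must check that the EI sign convention makes $\psi_j'=\Prob(Y_j\ge t)$ with the correct orientation. One must also handle incumbent coordinates at or below the reference by clipping, which makes $\tilde a_j^{(i)}=0$. Finally, the general rectangular orientation follows by translating $r$ to $0$ and reflecting coordinates, which is routine.
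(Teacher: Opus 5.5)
Your proof is correct and follows essentially the same route as the paper's sketch. In both, Tonelli/Fubini together with independence reduce EHVI to integrating $\prod_j\Prob(Y_j\ge z_j)$ over the non-dominated region, and the one-dimensional tail integrals $\int_0^t\Prob(Y_j\ge s)\dd s=\EI(0;\mu_j,\sigma_j^2)-\EI(t;\mu_j,\sigma_j^2)$ are exactly the PHVI coordinates. The only difference is packaging. The paper decomposes the improvement region into signed axis-aligned boxes and transforms each box separately, keeping the inclusion--exclusion pattern. You instead use a single order-preserving diffeomorphism $\Psi$ with Jacobian $\prod_j\Prob(Y_j\ge z_j)$, which makes the preservation of the box structure, and the identification of $\tilde r$ as the image of the whole orthant, automatic.
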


The theorem should be read as a change of geometry. The expectation over a Gaussian predictive distribution is absorbed into the coordinate transformation. Once this has been done, the remaining object is an ordinary deterministic hypervolume improvement. This is why the theorem is useful not only for computation, but also for structural arguments.

\begin{theorem}[Canonical EHVI is mean- and variance-monotone]
Assume independent Gaussian predictive coordinates and a fixed incumbent set. In minimization orientation, improving the predictive mean coordinatewise cannot decrease EHVI, and increasing any predictive standard deviation while keeping the mean fixed cannot decrease EHVI.
\end{theorem}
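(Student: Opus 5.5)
The plan is to reduce the claim to two facts: monotonicity of ordinary hypervolume improvement in the transformed coordinates, and monotonicity of the scalar Gaussian expected-improvement function $\EI(a;\mu,\sigma^2)$. Both are transported through the PHVI representation theorem stated just above. I will work in the normalized maximization form of that theorem, where minimization is obtained by negating objectives, so "improving the mean" means increasing each $\mu_j$. By the PHVI theorem,
\[
\EHVI_A(\mu,\sigma)=\HVI(\{\tilde r\},\tilde A),\qquad \tilde r_j=\EI(0;\mu_j,\sigma_j^2),\quad \tilde a^{(i)}_j=\tilde r_j-\EI(a^{(i)}_j;\mu_j,\sigma_j^2).
\]
The predictive parameters enter only through the numbers $\tilde r_j$ and $\tilde a^{(i)}_j$, and the incumbent $A$ is fixed. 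It therefore suffices to show that a favourable parameter change moves $\tilde r$ and $\tilde A$ in a direction in which $\HVI$ cannot decrease.

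\emph{Step 1 (scalar EI).} Write $\EI(a;\mu,\sigma^2)=\E[(Y-a)_+]$ with $Y\sim N(\mu,\sigma^2)$. The map $y\mapsto (y-a)_+$ is nondecreasing, so $\EI$ is nondecreasing in $\mu$. The same map is convex, so by Jensen or convex-order dominance of $N(\mu,\sigma'^2)$ over $N(\mu,\sigma^2)$ for $\sigma'\ge\sigma$, $\EI$ is nondecreasing in $\sigma$. The closed form $\partial_\sigma \EI=\sigma\varphi((\mu-a)/\sigma)\cdot(1/\sigma)=\varphi(\cdot)\ge 0$ gives the same conclusion. These facts must also be compared across thresholds, which I record as
\[
\partial_\mu \EI(a;\mu,\sigma^2)=\Phi\!\left(\tfrac{\mu-a}{\sigma}\right).
\]

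\emph{Step 2 (geometry of HVI in transformed coordinates).} The quantity $\HVI(\{\tilde r\},\tilde A)$ is the volume of the box $[0,\tilde r]$ minus the part dominated by $\tilde A$, with the points of $\tilde A$ also lying in $[0,\tilde r]$. Equivalently it is the measure of $\{u\in[0,\tilde r]:\ u\not\le \tilde a^{(i)}\text{ for all }i\}$, suitably oriented. Along coordinate $j$, the slab structure has widths $\tilde r_j-\tilde a^{(i)}_j=\EI(a^{(i)}_j;\mu_j,\sigma_j^2)$ and $\tilde r_j$ itself. The cleanest route is to write $\HVI$ as an integral over the $j$-th coordinate of the measure of the lower-dimensional section. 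As a function of the $j$-th coordinate, this section is a union of boxes whose $j$-extents are intervals determined by thresholds $\EI(a^{(i)}_j)$ and $\EI(0)$. Fixing all other coordinates, $\HVI$ is then a sum $\sum_k c_k\,\EI(b_k;\mu_j,\sigma_j^2)$ with nonnegative coefficients $c_k$ (section volumes), taken over the distinct thresholds $b_k\in\{0,a^{(i)}_j\}$. This follows from a telescoping decomposition along coordinate $j$: the $j$-extent of the non-dominated part at a given section level is exactly an $\EI$ value. Given this decomposition, Step 1 gives that $\HVI$ is nondecreasing in $\mu_j$ and in $\sigma_j$. Changing one coordinate at a time then proves the theorem.

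The main obstacle is Step 2, specifically verifying that the coefficients $c_k$ are nonnegative and independent of the $j$-th parameters. Naive monotonicity of HVI under moving $\tilde r$ outward is not enough, because $\tilde a^{(i)}_j=\tilde r_j-\EI(a^{(i)}_j)$ moves together with $\tilde r_j$. I would first try to avoid this by working in the translated coordinates $u_j=\tilde r_j-v_j$. In these coordinates the reference sits at the origin and each incumbent point sits at $\EI(a^{(i)}_j;\mu_j,\sigma_j^2)$. The region counted is then the box $[0,\EI(0)]$ minus the union of boxes $[0,\EI(a^{(i)})]$ in the reflected orientation, which is the region between two staircases anchored at the origin. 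The $j$-th coordinate enters only through the nested thresholds $\EI(a^{(i)}_j)\le \EI(0)$ (ordered because $a^{(i)}_j\ge 0$), and the decomposition into a nonnegative combination of increments $\EI(b_{k})-\EI(b_{k+1})$ follows. The remaining check is that each such difference of $\EI$ at two thresholds $b_k<b_{k+1}$ is itself monotone in $\mu_j$ and $\sigma_j$. This holds for $\mu_j$ by the $\Phi$ formula, since $\Phi$ is decreasing in $a$. For $\sigma_j$ it does not hold in general, so the sum must be regrouped as $\sum_k (c_k-c_{k-1})\,\EI(b_k)$. Monotone nesting of the sections makes these coefficients nonnegative, and that is where I expect the real work to lie.
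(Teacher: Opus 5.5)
Your proposal is correct. It shares the paper's skeleton: the PHVI representation, followed by monotonicity of the scalar EI transform in $\mu$ and, via convex order, in $\sigma$. Where it differs is the transfer step, which you handle more carefully than the paper does.

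The paper's sketch ends by saying that each transformed coordinate is nondecreasing in $\sigma_j$, and that monotonicity of ordinary HVI in the transformed coordinates then finishes the proof. Your objection to this is valid. $\HVI(\{\tilde r\},\tilde A)$ is nondecreasing in $\tilde r$ but nonincreasing in the $\tilde a^{(i)}$. Moreover, $\tilde a^{(i)}_j=\int_0^{a^{(i)}_j}\Prob(Y_j\ge s)\dd s$ can itself increase with $\sigma_j$ (e.g.\ whenever $\mu_j\le 0$). So the paper's sentence is only justified when read in the gap variables $\tilde r_j-\tilde a^{(i)}_j=\EI(a^{(i)}_j;\mu_j,\sigma_j^2)$.

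Your section decomposition is exactly this reading made rigorous. In your translated coordinates $u=\tilde r-v$, fix $u_{-j}$. The $j$-section is then an interval of length $\EI(b(u_{-j});\mu_j,\sigma_j^2)$, where $b(u_{-j})\in\{0\}\cup\{a^{(i)}_j\}_i$ is determined by $u_{-j}$ and the other coordinates' parameters alone. Hence $\EHVI=\sum_k c_k\,\EI(b_k;\mu_j,\sigma_j^2)$ with $c_k\ge 0$ independent of $(\mu_j,\sigma_j)$. You even get $\partial_{\sigma_j}\EHVI=\sum_k c_k\,\varphi\bigl((\mu_j-b_k)/\sigma_j\bigr)\ge 0$.

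This direct form already has nonnegative coefficients. The detour through increments and Abel regrouping, which you flag as ``the real work,'' is therefore unnecessary. The nesting property that detour would rely on is the same convexity of $y_j\mapsto I_A(y)$ (non-dominated sections grow as $y_j$ improves) that the paper cites in its first sentence.

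One correction. In the translated coordinates the removed sets are the upper boxes $\prod_k[\EI(a^{(i)}_k;\mu_k,\sigma_k^2),\,\EI(0;\mu_k,\sigma_k^2)]$, not $[0,\EI(a^{(i)})]$ as your last paragraph literally says. With lower boxes the sections would have lengths of the form $\EI(0)-\EI(b)$, and the $\sigma$-monotonicity would be lost. So the orientation is load-bearing, and your Step 2 claim that each section length is an EI value holds only with the upper boxes.

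As for what each route buys: yours yields a complete transfer argument and explicit derivative formulas. The paper's convexity remark, taken on its own, supports an even shorter proof that bypasses PHVI entirely. Condition on $Y_{-j}$ and apply the convex order of $N(\mu_j,(\sigma_j')^2)$ over $N(\mu_j,\sigma_j^2)$, for $\sigma_j'\ge\sigma_j$, directly to the convex nondecreasing map $y_j\mapsto I_A(y)$.
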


\section{Preference-shaped EHVI}
\label{sec:preference-ehvi}

Three preference mechanisms for EHVI are considered here. Product densities change the measure used by hypervolume. Cone orders change the dominance relation. Truncation changes the predictive distribution by conditioning on a region of interest. These operations are sometimes grouped together under preference handling, but mathematically they have different consequences. We therefore treat them separately.

\subsection{Positive product-density and desirability transformations}

Product densities shape hypervolume by changing the local importance of objective-space regions. Desirability functions provide a convenient coordinate-wise interpretation of such densities.

A product density has the form
\[
K(y)=\prod_{j=1}^m k_j(y_j),\qquad k_j(t)=D_j'(t),
\]
where $D_j$ is a coordinate-wise desirability index. Such density transformations are closely related to reference-point-free weighted hypervolume and desirability-function interpretations \cite{EmmerichDeutzYevseyeva2014}, and to the D-PHI aspiration--reservation construction \cite{LiangShavazipourSainiEmmerich2026}. The transformation
\[
T_j(t)=D_j(t)=\int^t k_j(s)\dd s
\]
maps weighted hypervolume into ordinary hypervolume in desirability space. Figure~\ref{fig:dphi-desirability} shows this construction for a single objective: aspiration and reservation levels define a monotone desirability map, and its derivative supplies the marginal kernel factor that weights hypervolume improvement.

\begin{theorem}[Positive product-density EHVI]
Let $K(y)=\prod_j k_j(y_j)$, where each $k_j\in L^1$ is nonnegative and positive almost everywhere on the relevant interval. Then the corresponding weighted hypervolume is an ordinary hypervolume after the coordinate-wise transformation $T=(T_1,\ldots,T_m)$. If the predictive model is Gaussian in transformed coordinates, expected weighted hypervolume improvement inherits exact computation, strict Pareto compliance, and variance monotonicity from canonical EHVI.
\end{theorem}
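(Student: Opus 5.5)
The argument rests on the change-of-variables formula for the product map $T=(T_1,\ldots,T_m)$ with $T_j(t)=\int^t k_j(s)\dd s$. Each $k_j$ is nonnegative, integrable and positive almost everywhere on the relevant interval, so each $T_j$ is absolutely continuous and strictly monotone there. Hence $T$ is a homeomorphism onto its image, and it maps axis-parallel boxes to axis-parallel boxes. I will fix the orientation once: in minimization, $T_j$ is taken order-reversing (as for desirability, where $D_j(a_j)=1$ and $D_j(r_j)=0$), so that dominance in $y$ becomes maximization-dominance in $T(y)$.

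The first step is the box identity. For a box $B=\prod_j[a_j,r_j]$,
\[
\int_B K(y)\dd y=\prod_j\int_{a_j}^{r_j}k_j(y_j)\dd y_j=\prod_j|T_j(r_j)-T_j(a_j)|=\mathrm{vol}(T(B)),
\]
by Fubini and the fundamental theorem of calculus for absolutely continuous functions. Next I extend this to $\Dom_r(A)$. Because $T$ is a bijection, $T(\Dom_r(A))=\Dom_{T(r)}(T(A))$. I decompose $\Dom_r(A)$ into finitely many boxes that overlap only on their boundaries, for example using the grid generated by the coordinates of $A\cup\{r\}$. The boundaries are null for both measures, since $T_j$ is absolutely continuous and so carries null sets to null sets. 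Summing the box identity over the pieces gives $\WHV_{K,r}(A)=\HV_{T(r)}(T(A))$. Applying this to $A\cup\{y\}$ and to $A$ and subtracting gives the analogous identity for improvements,
\[
\HVI_{K,r}(y,A)=\HVI_{T(r)}(T(y),T(A)).
\]

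Once the identity holds, the three properties transfer. For exact computation, I set $\tilde Y=T(Y)$, which is Gaussian with independent coordinates by hypothesis. Then the expected weighted improvement equals $\E[\HVI_{T(r)}(\tilde Y,T(A))]$, which is canonical EHVI in transformed coordinates, and the PHVI representation theorem applies directly. For variance monotonicity, I apply the preceding monotonicity theorem to the transformed predictive parameters $(\tilde\mu,\tilde\sigma)$. I will state explicitly that the monotonicity is with respect to transformed-coordinate variances, because the hypothesis places the Gaussian assumption there.

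For strict Pareto compliance, suppose $A$ weakly dominates $B$ and $A$ dominates some point of $B$ that is not covered by $\Dom_r(B)$. Then $\Dom_r(A)\setminus\Dom_r(B)$ contains a set of positive Lebesgue measure inside the reference box. Because $K>0$ almost everywhere there, the weighted measure of that set is positive, so $\WHV_{K,r}(A)>\WHV_{K,r}(B)$. The same conclusion follows from strict compliance of ordinary hypervolume, since $T$ preserves dominance and maps sets of positive measure to sets of positive measure.

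The main obstacle is the positivity requirement. If $k_j$ vanished on a subinterval, $T_j$ would be constant there, distinct points would collapse, and strictness would fail. The statement's hypothesis of positivity almost everywhere, as opposed to everywhere, is still enough for strict monotonicity of $T_j$, and I will check that carefully. I also need to confirm that the orientation convention in the PHVI theorem (normalized maximization with reference zero) matches the transformed problem. This should only require a translation that sends $T(r)$ to the origin.
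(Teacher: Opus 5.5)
Your proposal is correct and follows essentially the same route as the paper: define $T_j$ as a primitive of $k_j$, establish the box identity $\int_B K(y)\dd y=\prod_j|T_j(r_j)-T_j(a_j)|$ by Fubini, extend it to $\Dom_r(A)$ by finite additivity over a box decomposition, and then obtain exact computation and variance monotonicity from canonical EHVI in transformed coordinates and strict compliance from $K>0$ almost everywhere on a positive-measure improvement region. Your order-reversing choice of $T_j$ (the paper uses the nondecreasing $T_j(t)=\int_{\ell_j}^{t}k_j(s)\dd s$) is only a matter of orientation, and your positive-measure step, like the paper's, tacitly needs the newly dominating point to lie strictly inside the reference box.
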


\begin{proposition}[Positive kernels for strict Pareto compliance]
A weighted hypervolume density $K$ yields a finite strictly Pareto-compliant weighted hypervolume on the reference-bounded domain $\Omega_r$ if and only if $K\in L^1(\Omega_r)$ and $K>0$ almost everywhere on $\Omega_r$. In the product-density case $K(y)=\prod_j k_j(y_j)$ with nonnegative marginal kernels, this is equivalent to $k_j>0$ almost everywhere for every coordinate $j$.
\end{proposition}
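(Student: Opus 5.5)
The plan is to split the equivalence into finiteness, sufficiency of positivity, and necessity of positivity, and then to specialise to product densities. Throughout, $\WHV_{K,r}(A)=\int_{\Dom_r(A)}K\dd y$ is viewed as the measure $\mu_K(B)=\int_B K\dd y$ evaluated on dominated regions. A comparison of two sets $A\preceq B$ with $A$ strictly better changes the dominated region by a set $\Dom_r(A)\setminus\Dom_r(B)$, which contains a nondegenerate box.

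\emph{Finiteness.} Every dominated region lies in $\Omega_r$, so $K\in L^1(\Omega_r)$ gives finiteness for all $A$. Conversely, a set $A$ whose dominated region is all of $\Omega_r$ (for instance the ideal corner point) forces $\int_{\Omega_r}K<\infty$. Hence finiteness on all admissible sets is equivalent to $K\in L^1(\Omega_r)$.

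\emph{Sufficiency.} Assume $K>0$ almost everywhere. Weak compliance follows from $K\ge 0$ and the inclusion $\Dom_r(B)\subseteq\Dom_r(A)$. For strict compliance, the difference $\Dom_r(A)\setminus\Dom_r(B)$ contains a nondegenerate box $Q$. Since $K>0$ a.e. on $Q$ and $|Q|>0$, we get $\int_Q K>0$. This direction is routine.

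\emph{Necessity.} This is the main obstacle. For finite sets only, the improvement regions are finite unions of boxes, and a density vanishing on a fat Cantor-type set still charges every box. So the argument must read ``strictly Pareto-compliant'' in the measure sense in which the paper uses it. Concretely, the indicator must strictly separate every pair of (possibly infinite, closed) attainable sets whose dominated regions differ by a set of positive Lebesgue measure. Dominated regions of countable sets generate the Borel $\sigma$-algebra on $\Omega_r$, so by a monotone-class argument compliance then means $\mu_K(E)>0$ whenever $|E|>0$, i.e.\ $\lambda\ll\mu_K$.

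Now suppose $Z=\{K=0\}$ had positive measure. Using inner regularity, pick a compact $C\subseteq Z$ with $|C|>0$. Then take an attainable set $A$ with $\Dom_r(A)=\Omega_r$, and a second set $B$ obtained by removing from $A$ the points generating $C$. The removal is done via the downward-closed decomposition of $\Omega_r\setminus C$ into dominated pieces. This gives $\mu_K(\Dom_r(A)\setminus\Dom_r(B))\le\mu_K(C)=0$, which contradicts strictness. The delicate step is the construction of $B$, namely realising $\Omega_r\setminus C$ (up to a null set) as a dominated region. I would first try a Lebesgue density point of $C$ together with a grid of boxes on which $K$ integrates to at most $\varepsilon$, and then pass to the limit $\varepsilon\to 0$. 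I would also state explicitly that, for finite sets alone, necessity weakens to positivity on boxes.

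\emph{Product case.} For $K=\prod_j k_j$ with $k_j\ge 0$, Fubini gives
\[
\{K=0\}=\bigcup_j \bigl(\R^{j-1}\times\{k_j=0\}\times\R^{m-j}\bigr)\cap\Omega_r,
\]
up to null sets. This union is null iff each $\{k_j=0\}$ is null on its interval, so $K>0$ a.e.\ iff $k_j>0$ a.e.\ for every $j$. Integrability of $K$ on the box $\Omega_r$ is likewise equivalent to $k_j\in L^1$ for each $j$, provided the other factors are not a.e.\ zero. That holds under positivity, which completes the equivalence stated in the proposition.
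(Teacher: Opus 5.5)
\paragraph{Verdict.} Your finiteness and sufficiency steps match the paper's, and your fat-Cantor worry is exactly right. The repair you propose for necessity cannot succeed, however, because the ``only if'' direction is false for every reading of strict compliance built on dominated regions, finite or infinite. The only escape is to \emph{define} compliance as $\int_EK>0$ for all measurable $E$ with $|E|>0$, which turns the statement into a tautology.

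\paragraph{Why the counterexample works.} Take $\Omega_r=\prod_i[\ell_i,r_i]$ and suppose $\Dom_r(B)\subseteq\Dom_r(A)$. Then $S=\Dom_r(A)\setminus\Dom_r(B)=\Omega_r\cap U\cap L$, where $U=A+\R^m_{\ge0}$ is an up-set and $L=\R^m\setminus(B+\R^m_{\ge0})$ is a down-set. Write $e=(1,\dots,1)$. For an up-set $U$ and $y\in\overline U$ one has $y+te\in\operatorname{int}U$ for all $t>0$. Hence every line $\{x+te:t\in\R\}$ meets $\partial U$ in at most one point, and $\partial U$ is Lebesgue-null by Fubini. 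The same holds for $\partial L=\partial(B+\R^m_{\ge0})$. So $S$ coincides up to a null set with the open set $\operatorname{int}\Omega_r\cap\operatorname{int}U\cap\operatorname{int}L$, and $|S|>0$ forces $S$ to contain a nondegenerate box.

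Now let $F\subseteq[\ell_1,r_1]$ be closed and nowhere dense with $|F|>0$, and set $K(y)=\1\{y_1\notin F\}$. Then $\int_QK>0$ for every nondegenerate box $Q$, so $K$ is strictly compliant for arbitrary sets. Yet $K=0$ on $\{y\in\Omega_r:y_1\in F\}$, which has positive measure. The choice $k_1=\1_{F^c}$, $k_j\equiv1$ for $j\ge2$ defeats the product clause as well.

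\paragraph{Where each step of your plan fails.}
\begin{itemize}
\item The property ``$|E|>0\Rightarrow\mu_K(E)>0$'' is not stable under decreasing intersections. The Cantor stages $F_n\times\prod_{i\ge2}[\ell_i,r_i]$ each have positive $\mu_K$-mass, but their intersection has none. So no monotone-class argument transports the property to Borel sets.
\item $\Omega_r\setminus C$ is (up to null sets) a dominated region only if $C$ is a relative down-set. Such a $C$ of positive measure contains a box, so it cannot lie in $\{K=0\}$.
\item The $\varepsilon\to0$ limit yields only pairs with small but \emph{positive} improvement, which does not contradict strictness.
\end{itemize}

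\paragraph{Comparison with the paper and the correct statement.} The paper's own proof has the same hole. It first obtains $K\ge0$ a.e.\ from $A=\{a\}$, $B=\{b^{(1)},\dots,b^{(m)}\}$, whose nested difference is a box $Q$ with $\int_QK<0$ supplied by Lebesgue differentiation. It then simply asserts that a positive-measure improvement region fits inside $\{K=0\}$, which fails for nowhere dense zero sets. You should include that box construction, since your proposal reaches $K\ge0$ only through the invalid monotone-class step.

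What the two arguments above actually prove is the characterization you hint at in your last sentence, and it holds for arbitrary sets, not only finite ones. The weighted hypervolume is finite and strictly compliant (whenever the improvement region has positive measure) if and only if $K\in L^1(\Omega_r)$, $K\ge0$ a.e., and $\int_QK>0$ for every nondegenerate box $Q\subseteq\Omega_r$.

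\paragraph{A smaller point on sufficiency.} Your claim that a strict improvement always contains a box needs the points to strictly dominate $r$, or the positive-measure hypothesis the paper uses. With $r=(1,1)$, $\{(1,c)\}$ strictly dominates $\{r\}$, although both dominated regions are null.
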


\subsection{Cone-based EHVI}

Cone-based hypervolume changes the dominance order itself. This subsection explains why simplicial cone orders can still be handled by ordinary hypervolume after a linear coordinate transformation.

Cone orders were introduced for hypervolume indicators to encode admissible trade-off directions and maximum trade-off constraints \cite{EmmerichDeutzKruisselbrinkShukla2013}. Figure~\ref{fig:cone-hv-2d} compares the ordinary Pareto cone with a wider cone and shows how the dominated region changes when the preference order is changed.

\begin{proposition}[Cone hypervolume reduces to ordinary hypervolume]
For a simplicial cone $C$ and $L=C^{-1}$,
\[
\CHV_{C,r}(A)=|\det C|\,\HV_{Lr}(LA).
\]
Consequently, cone-based EHVI is ordinary EHVI in cone coordinates, up to a positive determinant factor.
\end{proposition}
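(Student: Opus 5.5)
The proof is a change of variables under the linear map $L=C^{-1}$, where $C$ is the matrix whose columns are the generators $c_1,\dots,c_m$ of the simplicial cone, so that $C=\{C\alpha:\alpha\in\R^m_+\}$. First we fix the definition of the cone-dominated region used by $\CHV$. In minimization orientation,
\[
\Dom^C_r(A)=\bigcup_{a\in A}\bigl(a+C\bigr)\cap\bigl(r-C\bigr),
\]
that is, the set of points $y$ with $y-a\in C$ and $r-y\in C$. This is the parallelogram shaded in Figure~\ref{fig:cone-hv-2d}, and $\CHV_{C,r}(A)$ is its Lebesgue measure. The key observation is that $y-a\in C$ holds exactly when $L(y-a)\in\R^m_+$, i.e.\ when $Ly\ge La$ componentwise. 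The same argument gives $r-y\in C$ exactly when $Lr\ge Ly$.

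Applying this to each term of the union gives
\[
L\bigl(\Dom^C_r(A)\bigr)=\bigcup_{a\in A}[ (La)_1,(Lr)_1]\times\cdots\times[(La)_m,(Lr)_m]=\Dom_{Lr}(LA).
\]
Since $L$ is a bijection, it commutes with unions and intersections. We then apply the linear change-of-variables formula $\int_{\Dom^C_r(A)}1\dd y=|\det C|\int_{L(\Dom^C_r(A))}1\dd u$, using $y=Cu$. This yields $\CHV_{C,r}(A)=|\det C|\,\HV_{Lr}(LA)$.

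The same identity applied to $A\cup\{y\}$ and to $A$, followed by subtraction, gives $\CHVI$ as $|\det C|$ times $\HVI$ in the coordinates $u=Ly$. For the EHVI claim we take expectations. If $Y$ is the predictive vector, then $\E[\CHVI(Y)]=|\det C|\,\E[\HVI_{Lr}(LY,LA)]$, which is $|\det C|$ times ordinary EHVI for the transformed predictive variable $LY$ with incumbent set $LA$ and reference point $Lr$.

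The main subtlety is not the computation but the conventions. First, the orientation of the cone must be matched with minimization, so that the definition of $\Dom^C_r$ corresponds to $[La,Lr]$ rather than a reversed box. Second, $C$ must be nonsingular; simplicial cones guarantee this. I would also remark, without claiming it, that ``ordinary EHVI'' in the sense of the PHVI representation theorem requires independent Gaussian coordinates for $LY$. If $Y$ is Gaussian then $LY$ is Gaussian, but with covariance $L\Sigma L^\top$, which is generally not diagonal. The reduction therefore holds at the level of the expectation identity, and the PHVI formulas become exact when the model is specified directly in cone coordinates.
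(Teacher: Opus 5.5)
Your proof is correct and follows the paper's route: you map the cone-dominated region by $L=C^{-1}$ onto the ordinary dominated region of $LA$ with reference $Lr$, pick up the factor $|\det C|$ from the linear change of variables, and then pass the identity inside the predictive expectation. You are also more careful than the paper on two points: you state the definition $\bigcup_{a\in A}(a+C)\cap(r-C)$, which the paper leaves implicit, and you get the Jacobian direction unambiguously from $y=Cu$; and your caveat is legitimate and glossed over in the paper, namely that $LY$ has covariance $L\Sigma L^\top$, so exactness via PHVI needs the model to have independent coordinates in cone space.
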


\subsection{Truncated EHVI}

Truncated EHVI focuses the predictive distribution on a region of interest. This is useful for preference focusing, but its monotonicity behaviour differs from product-density and cone transformations.

Truncated EHVI was proposed as a region-of-interest acquisition for preference-based Bayesian multiobjective optimization \cite{YangLiDeutzBackEmmerich2016}. A region of interest may be specified by a box $[a,b]$. Truncated EHVI uses the Gaussian predictive density conditioned on $Y\in[a,b]$:
\[
\TEHVI_A(\mu,\sigma;a,b)=\int_{[a,b]}I_A(y)p_{\mu,\sigma}^{[a,b]}(y)\dd y.
\]

\begin{proposition}[Truncated EHVI]
For independent truncated coordinates, a PHVI-type representation remains available. Mean monotonicity holds for fixed truncation bounds, but variance monotonicity can fail in general for fixed-box truncation.
\end{proposition}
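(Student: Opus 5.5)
The proposition has three parts: a PHVI-type representation, mean monotonicity for fixed truncation bounds, and possible failure of variance monotonicity. I would prove them in that order, by retracing the proof of the PHVI representation theorem with truncated marginals in place of Gaussian ones.

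\textbf{Step 1: the PHVI-type representation.} Two facts from the PHVI derivation carry over.

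First, the derivation uses only two things: independence across coordinates, and the decomposition of $I_A(y)$ into boxes. Concretely, $I_A(y)$ is the Lebesgue measure of $\Dom_r(\{y\})\setminus\Dom_r(A)$. Writing this as $\int \1[y\le z]\,\1[z\notin\Dom_r(A)]\dd z$ and applying Fubini gives
\[
\E[I_A(Y)]=\int_{\Omega_r\setminus\Dom_r(A)}\prod_j \Prob(Y_j\le z_j)\dd z .
\]
Second, in one dimension, $\int_{z}^{r_j}\Prob(Y_j\le s)\dd s$ is a difference of truncated-EI values. This follows from $\int_{-\infty}^{t}\Prob(Y_j\le s)\dd s=\E[(t-Y_j)_+]$.

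Define the coordinate-wise transform $\phi_j(t)=\E[(t-Y_j)_+]$ under the truncated law on $[a_j,b_j]$. Each $\phi_j$ is nondecreasing and convex, with density $\Prob(Y_j\le t)$. The substitution $u_j=\phi_j(z_j)$ then maps the integrand to Lebesgue measure on the image. So TEHVI equals the ordinary HVI of the transformed point $\tilde y=(\phi_j(\text{lower limit}))_j$ relative to the transformed set $\phi(A)$ and reference $\phi(r)$, in the same orientation as the theorem. The truncated EI has a closed form through $\Phi,\varphi$ evaluated at the standardized truncation limits, so the representation is exact.

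\textbf{Step 2: mean monotonicity.} I would avoid differentiating the closed form. For fixed $[a_j,b_j]$ and fixed $\sigma_j$, the truncated normal family is stochastically increasing in $\mu_j$; its likelihood ratio in $\mu_j$ is monotone. Lowering $\mu_j$ in minimization orientation therefore makes $\Prob(Y_j\le z_j)$ larger for every $z_j$. The integrand in the Fubini expression above is pointwise nondecreasing, so TEHVI cannot decrease. No PHVI structure is needed for this part beyond positivity of the integrand.

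\textbf{Step 3: failure of variance monotonicity.} This needs a counterexample, and I expect the main obstacle to be making one fully explicit. I would use $m=1$, where TEHVI reduces to a truncated EI $\E[(a^{(1)}-Y)_+\mid Y\in[a,b]]$.

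The construction places the incumbent value $c=a^{(1)}$ inside $[a,b]$ and puts the mean $\mu$ at the lower end, near $a$. As $\sigma\to 0$, the truncated law concentrates at $\mu$, giving value close to $c-\mu$. As $\sigma\to\infty$, the truncated law tends to the uniform distribution on $[a,b]$, giving value $(c-a)^2/(2(b-a))$. Choosing $\mu=a$ with $c-a$ small relative to $b-a$ gives
\[
c-a>\frac{(c-a)^2}{2(b-a)},
\]
so the limiting value at large $\sigma$ is strictly below the value at small $\sigma$. By continuity in $\sigma$, there exist $\sigma<\sigma'$ with $\TEHVI(\sigma')<\TEHVI(\sigma)$.

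To be concrete, I would take $[a,b]=[0,1]$, $c=1/2$, $\mu=0$ and compare the two limits: $1/2$ versus $1/8$. The intuition is that truncation redistributes mass away from the favourable tail instead of spreading it there. Finally, I would embed this example in $m$ dimensions using degenerate extra coordinates, which is permitted under fixed-box truncation.
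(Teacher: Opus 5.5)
Your proposal is correct and follows the same route as the paper's proof. You factorize the truncated density and rerun the Fubini/PHVI argument with truncated-EI transforms, obtain mean monotonicity from the favourable shift of the one-dimensional truncated laws, and refute variance monotonicity with a one-dimensional example embedded as a coordinate slice. Your version is more complete than the paper's sketch in two respects: applying the stochastic-order argument directly to the integrand $\prod_j \Prob(Y_j\le z_j)$ avoids tracking the simultaneous motion of the transformed reference and incumbents, and your explicit example ($[0,1]$, $c=1/2$, $\mu=0$, limits $1/2$ versus $1/8$) supplies the counterexample the paper only asserts.
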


\section{Exact \texorpdfstring{$R_2$}{R2} improvement and scalarization-space volume}
\label{sec:r2-volume}

We now move from hypervolume to \(R_2\). The central distinction is that \(R_2\) does not measure an objective-space dominated region. It measures the quality of a set through the lower envelope of Tchebycheff scalarization values. For the improvement form of exact integral \(R_2\), this gives a volume between two scalarization envelopes in the space of weights and achievement levels.

\begin{definition}[Scalarization-space shadow]
Define
\[
\mathcal H(A;r)=\{(\lambda,t):\lambda\in\DeltaM,\ h_A(\lambda)\le t\le h_r(\lambda)\}.
\]
\end{definition}

\begin{theorem}[Exact $R_2$ improvement as scalarization-space volume]
For every finite approximation set $A$,
\[
I_{R_2}(A;r)=\int_{\mathcal H(A;r)}\rho(\lambda)\dd\sigma(\lambda)\dd t.
\]
\end{theorem}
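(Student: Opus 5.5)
The plan is to prove the identity by Tonelli's theorem, integrating out the achievement variable \(t\) first for each fixed weight.

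\textbf{Measurability.} First I check that the integrand and the region are measurable on \(\DeltaM\times\R\). For each fixed \(y\), the map \(\lambda\mapsto g_\lambda(y;z^+)=\max_i\lambda_i(y_i-z_i^+)\) is a maximum of finitely many linear functions, so it is continuous. Hence \(h_A\) is a minimum of finitely many continuous functions and is continuous, and the same holds for \(h_r=g_\cdot(r;z^+)\). Therefore \(\mathcal H(A;r)\) is the set cut out by two continuous inequalities in \((\lambda,t)\), so it is a Borel (indeed closed) subset of \(\DeltaM\times\R\). The integrand \(\rho(\lambda)\mathbf 1_{\mathcal H}(\lambda,t)\) is nonnegative and measurable.

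\textbf{Slice computation.} Tonelli's theorem then gives
\[
\int_{\mathcal H(A;r)}\rho(\lambda)\dd\sigma(\lambda)\dd t=\int_{\DeltaM}\rho(\lambda)\Bigl(\int_\R \mathbf 1\{h_A(\lambda)\le t\le h_r(\lambda)\}\dd t\Bigr)\dd\sigma(\lambda).
\]
For a fixed \(\lambda\), the inner integral is the length of the interval \([h_A(\lambda),h_r(\lambda)]\). If \(h_A(\lambda)\le h_r(\lambda)\), this length is \(h_r(\lambda)-h_A(\lambda)\). Otherwise the interval is empty and the length is \(0\). In both cases the inner integral equals \((h_r(\lambda)-h_A(\lambda))_+\). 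Substituting this back reproduces exactly the defining formula for \(I_{R_2}(A;r)\).

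\textbf{Main obstacle: well-definedness.} The delicate step is not the calculation but the conventions. The region \(\mathcal H\) contains only the weights where \(A\) beats the reference. This must match the positive part in the definition of \(I_{R_2}\); the slice argument above shows that it does. One also needs both sides to be finite, or at least equal in \([0,\infty]\). Tonelli yields equality in \([0,\infty]\) without any integrability assumption. Finiteness follows because \(0\le(h_r-h_A)_+\le h_r\le \max_i(r_i-z_i^+)\) whenever \(r\ge z^+\), with \(\rho\) integrable on the compact simplex. If \(\rho\) is not assumed integrable, I would simply state the identity in \([0,\infty]\).

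Finally, I would note that boundary sets where \(h_A=h_r\) contribute nothing. On such a set the \(t\)-slice is a single point of Lebesgue measure zero, so whether the inequalities in \(\mathcal H\) are taken strict or non-strict does not affect the integral.
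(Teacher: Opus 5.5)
Your proposal is correct and is essentially the paper's argument: the paper writes $(h_r(\lambda)-h_A(\lambda))_+$ as the layer-cake integral $\int_0^{h_r(\lambda)}\1\{h_A(\lambda)\le t\}\dd t$ and integrates against $\rho\,\dd\sigma$, which is the same slice computation you do via Tonelli. Integrating $t$ over all of $\R$, as you do, matches the definition of $\mathcal H(A;r)$ directly and avoids the paper's tacit assumption $h_A\ge 0$ behind the lower limit $0$; that same assumption is hidden in your bound $(h_r-h_A)_+\le h_r$, but finiteness follows anyway because $h_A$ and $h_r$ are continuous on the compact simplex.
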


The result is elementary, but important. It says that exact \(R_2\) improvement is an ordinary volume only after lifting the problem to \((\lambda,t)\)-space. The variable \(\lambda\) records the scalarization weight, while \(t\) records the achieved scalarization level. Thus the integral is not over objective space; it is over the space in which \(R_2\) is naturally defined. The geometric transition is the one shown in Figures~\ref{fig:r2-objective-view},~\ref{fig:r2-weight-view}, and~\ref{fig:tsm-2d-map}: weighted Tchebycheff rectangles touch the front in objective space, their best values form an envelope over weights, and the area between the incumbent and reference envelopes is the scalarization-space improvement volume.

\begin{theorem}[No general objective-space weighted-HV representation]
There is no Lebesgue-integrable objective-space density kernel $K$ such that $\WHV_{K,r}(A)=I_{R_2}(A;r)$ for all finite sets $A$, already in two-objective minimization with $z^+=(0,0)$ and $r=(1,1)$.
\end{theorem}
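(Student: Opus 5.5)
We argue by contradiction and exploit the lower-dimensional visibility of Tchebycheff scalarizations. Fix $m=2$, $z^+=(0,0)$, $r=(1,1)$, and suppose that some $K\in L^1(\Omega_r)$ satisfies $\WHV_{K,r}(A)=I_{R_2}(A;r)$ for all finite $A$. The plan has three steps. First, we use limits of shrinking sets: as a set collapses onto a segment, its weighted hypervolume tends to zero. Second, we show that the $R_2$ improvement of the limiting segment-like set stays positive. Third, we note that $R_2$ improvement is continuous in the points, so continuity gives the contradiction.

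The concrete choice is $A_\varepsilon=\{(1-\varepsilon,\,\tfrac12)\}$ and its limit $A_0=\{(1,\tfrac12)\}$. In objective space, $\Dom_r(A_\varepsilon)=[1-\varepsilon,1]\times[\tfrac12,1]$, which has Lebesgue measure $\varepsilon/2$. By absolute continuity of the integral, $\WHV_{K,r}(A_\varepsilon)\to 0$ as $\varepsilon\to0$. In fact $\WHV_{K,r}(A_0)=0$ exactly, since $\Dom_r(A_0)$ is the null segment $\{1\}\times[\tfrac12,1]$.

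On the $R_2$ side, $h_r(\lambda)=\max(\lambda,1-\lambda)$, and for $a=(1,\tfrac12)$ we get $g_\lambda(a)=\max(\lambda,\tfrac12(1-\lambda))$. For $\lambda<\tfrac12$ we have $1-\lambda>\lambda$, so $h_r(\lambda)=1-\lambda$. Whenever $\tfrac12(1-\lambda)\ge\lambda$, that is $\lambda\le\tfrac13$, we get $g_\lambda(a)=\tfrac12(1-\lambda)<1-\lambda$. For $\tfrac13<\lambda<\tfrac12$ we have $g_\lambda(a)=\lambda<1-\lambda$. Hence $(h_r-h_{A_0})_+>0$ on the interval $(0,\tfrac12)$ of positive $\sigma$-measure. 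Assuming $\rho>0$ there (for instance uniform $\rho$; otherwise we place the test point where $\rho$ has mass), this gives $I_{R_2}(A_0;r)>0$. Since the map $a\mapsto g_\lambda(a)$ is Lipschitz uniformly in $\lambda$, dominated convergence gives $I_{R_2}(A_\varepsilon;r)\to I_{R_2}(A_0;r)>0$. This contradicts $\WHV_{K,r}(A_\varepsilon)\to0$, or directly $0=\WHV_{K,r}(A_0)=I_{R_2}(A_0;r)>0$ if boundary points are admissible.

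The main obstacle is the hypothesis on $\rho$ and the exact class of admissible $A$. If $\rho$ could vanish on the relevant weights, the chosen boundary point could be invisible to $R_2$. I would handle this by choosing the boundary point $(1,s)$ with $s$ adjusted so that the set of weights where $g_\lambda(a)<h_r(\lambda)$ meets the support of $\rho$. Sliding $s\in(0,1)$ covers all $\lambda\in(0,\tfrac12)$, and the symmetric points $(s,1)$ cover $(\tfrac12,1)$. This works as long as $\rho$ is not concentrated at $\lambda=\tfrac12$ alone. If $\rho$ is concentrated there, we instead take $a=(1,1)-\delta e$ degenerate along the diagonal, using points with one coordinate equal to $1$ and the other arbitrary. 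The essential mechanism remains the same in every case: Lebesgue-null regions of objective space are dominated by points that Tchebycheff scalarizations still register with positive improvement.
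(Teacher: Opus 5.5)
Your proof is correct and is essentially the paper's. The paper takes $A_c=\{(1,c)\}$, notes that $\Dom_r(A_c)=\{1\}\times[c,1]$ is Lebesgue-null so $\WHV_{K,r}(A_c)=0$, and shows $(h_r-h_{A_c})_+=(1-\lambda)(1-c)>0$ on $[0,c/(1+c)]$; your $A_0$ is the case $c=\tfrac12$, and your $\varepsilon$-approximation and attention to where $\rho$ has mass are refinements the paper leaves implicit. (The continuity step can even be replaced by monotonicity, since $h_{A_\varepsilon}\le h_{A_0}$. Your final diagonal fallback is unnecessary, and would not work as written: a density cannot concentrate at $\lambda=\tfrac12$, a single $(1,s)$ and $(s,1)$ with $s<1$ already see all of $[0,\tfrac12)$ and $(\tfrac12,1]$, and no point with a coordinate equal to $1$ improves at $\lambda=\tfrac12$.)
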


This negative result prevents a misleading simplification. Exact integral \(R_2\) is not merely hypervolume with a different density. The proof shows that the two constructions disagree already on simple boundary examples: objective-space hypervolume sees zero area, whereas Tchebycheff scalarization still sees improvement for a set of weights of positive measure.

\section{Magnitude, Tchebycheff shadows, and ER2I}
\label{sec:magnitude-shadows}

The role of lower-dimensional contributions is addressed next. The negative result for objective-space weighted hypervolume suggests that one should look at indicators which can see more than full-dimensional volume. Metric magnitude is relevant for precisely this reason. Nevertheless, exact \(R_2\) is organized by scalarization directions rather than coordinate projections; this leads to the Tchebycheff shadow representation.

\begin{proposition}[Magnitude removes the zero-volume boundary obstruction]
There exist singleton approximation sets with zero ordinary hypervolume contribution but positive reduced magnitude contribution. For example, with $r=(1,1)$ and $A_c=\{(1,c)\}$, $0<c<1$, the dominated set has zero area but positive one-dimensional projection length.
\end{proposition}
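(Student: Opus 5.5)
The plan is to prove the statement directly from the definitions, splitting it into the two claims it bundles together: the ordinary hypervolume contribution of $A_c=\{(1,c)\}$ is zero, and its reduced magnitude contribution is positive. Since the statement is existential, the explicit family $A_c$ with $r=(1,1)$ and $0<c<1$ suffices.

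For the hypervolume part, I will compute $\Dom_r(A_c)=[1,1]\times[c,1]=\{1\}\times[c,1]$. This is a vertical segment, so its two-dimensional Lebesgue measure is zero. Hence $\HV_r(A_c)=0$, and so is the contribution relative to the empty incumbent set. The same holds for any weighted hypervolume $\WHV_{K,r}$ with $K\in L^1$, which links this example to the obstruction theorem of Section~\ref{sec:r2-volume}.

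For the magnitude part, I will use the standard reduced-magnitude formula for compact convex (or box-union) subsets of $\R^2$ under the $\ell_1$ metric, as recalled from \cite{Emmerich2026Magnitude}. In that setting, magnitude minus one is a sum of intrinsic-volume-type terms: the area, plus half the sum of the lengths of the coordinate projections.
\begin{enumerate}[label=(\roman*)]
\item The projection of $\{1\}\times[c,1]$ onto the $f_2$-axis has length $1-c>0$.
\item The projection onto the $f_1$-axis is a point and contributes $0$.
\end{enumerate}
The reduced magnitude is therefore $(1-c)/2>0$, or $1-c$ depending on the normalization convention. The sign is the same under either convention. Since the empty set, or the degenerate reference point $\{r\}$, has reduced magnitude $0$, the contribution is positive.

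The main obstacle is pinning down precisely which version of reduced magnitude is meant, and confirming that the formula applies to a degenerate (lower-dimensional) box. I would handle this by citing the explicit box formula for $\ell_1$ magnitude, where magnitude is multiplicative over products: a segment of length $\ell$ has magnitude $1+\ell/2$, and a point has magnitude $1$. The magnitude of $\{1\}\times[c,1]$ is then $1\cdot(1+(1-c)/2)$. Subtracting the baseline value $1$ gives the claimed positive quantity, so no appeal to general convex-geometry results is needed.
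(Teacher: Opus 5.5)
Your proposal is correct and follows essentially the paper's own argument: the dominated region is the zero-area segment $\{1\}\times[c,1]$, and in the two-dimensional reduced magnitude expansion only the $f_2$-projection term survives, giving $(1-c)/2$. One small slip, harmless here because the area vanishes: the $\ell_1$ product formula you invoke, $(1+a/2)(1+b/2)-1=\tfrac{a+b}{2}+\tfrac{ab}{4}$, gives the area term coefficient $1/4$, not $1$ (compare the paper's later value $p+p^2/4$), so your verbal description of the expansion as ``area plus half the projection lengths'' should be corrected.
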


\begin{theorem}[No clean bijection from standard magnitude to exact integral $R_2$]
In two-objective minimization with $z^+=(0,0)$, $r=(1,1)$, and uniform weights, exact integral Tchebycheff $R_2$ improvement is not a function of standard reduced magnitude of the dominated set.
\end{theorem}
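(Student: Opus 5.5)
The plan is to exhibit two approximation sets whose dominated regions have the same standard reduced magnitude but different exact integral $R_2$ improvement. This shows directly that no function $\Phi$ with $I_{R_2}(A;r)=\Phi(\text{reduced magnitude})$ can exist. Singletons $A=\{a\}$ with $a\in[0,1]^2$ are enough. Their dominated region is the box $[a_1,1]\times[a_2,1]$ with side lengths $u=1-a_1$ and $v=1-a_2$. For a box in $\ell_1$-product geometry, standard magnitude factorizes over coordinates:
\[
\operatorname{Mag}=(1+u/2)(1+v/2),\qquad
\text{reduced magnitude}=\tfrac{u+v}{2}+\tfrac{uv}{4}.
\]
Whatever normalization of ``reduced'' the paper uses, it is a fixed function $M(u,v)$ of the two side lengths. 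This $M$ is symmetric and strictly increasing in each argument, with the lower-dimensional projection term visible, as in the preceding proposition on $A_c=\{(1,c)\}$. So the level set $\{M(u,v)=m\}$ is a one-parameter curve. It runs from a degenerate segment $(u,v)=(c,0)$ to a square $(s,s)$, where $s$ solves $s+s^2/4=c/2$.

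Next, for $z^+=(0,0)$, $r=(1,1)$ and uniform $\rho$ on $\lambda\in[0,1]$, I compute the singleton improvement
\[
J(a)=\int_0^1\bigl(\max(\lambda,1-\lambda)-\max(\lambda a_1,(1-\lambda)a_2)\bigr)_+\dd\lambda .
\]

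First candidate: the degenerate endpoint $a=(1-c,1)$. For $\lambda\le 1/2$ the integrand is $(1-\lambda)-(1-\lambda)=0$. For $\lambda>1/2$ it is $\lambda-\max(\lambda(1-c),1-\lambda)$. This is piecewise linear with breakpoint $\lambda^*=1/(2-c)$, so the integral is an explicit rational expression in $c$.

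Second candidate: the square endpoint $a=(1-s,1-s)$. Here the integrand is $\max(\lambda,1-\lambda)-(1-s)\max(\lambda,1-\lambda)$, which gives
\[
J=s\int_0^1\max(\lambda,1-\lambda)\dd\lambda=\tfrac{3s}{4}.
\]

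Then I pick a concrete value, for example $c=1/2$, and evaluate both closed forms. I expect them to differ. Heuristically, the flat box is penalized because half of the weight range sees no improvement, while the square sees improvement for every weight. If they happen to coincide at one value, I would compare the square with an intermediate point on the level curve instead. Alternatively, I would show that $J$ restricted to the level curve has nonzero derivative somewhere, using the implicit-function parametrization $v=v(u)$.

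The main obstacle is bookkeeping rather than ideas. First, I must pin down exactly which normalization of ``standard reduced magnitude'' is intended, so that the two chosen boxes genuinely share the same value. Second, I must track the piecewise-linear breakpoints of the Tchebycheff integrand correctly. To keep this robust, I would phrase the argument for an arbitrary strictly increasing symmetric $M$ of $(u,v)$, since only equality of $M$ along the level curve is used. I would then check numerically for the specific $\ell_1$ formula that the two $J$ values differ. This strict inequality contradicts the existence of $\Phi$, which proves the statement.
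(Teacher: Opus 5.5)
Your plan is correct and is essentially the paper's own argument: the paper uses your degenerate endpoint with $c=1$, namely $A=\{(0,1)\}$ with reduced magnitude $\tfrac12$ and $I_{R_2}(A;r)=\tfrac14$, and compares it with the square $B=\{(s,s)\}$, $s=3-\sqrt6$, which has the same reduced magnitude but $I_{R_2}(B;r)=\tfrac34(\sqrt6-2)\neq\tfrac14$. Your choice $c=\tfrac12$ also works (your piecewise integral evaluates to $J=\tfrac16$, against $\tfrac34(\sqrt5-2)\approx 0.177$ for the square of side $\sqrt5-2$), but drop the idea of phrasing the argument for an arbitrary symmetric increasing $M$: the inequality depends on the specific formula $\tfrac{u+v}{2}+\tfrac{uv}{4}$, since taking $M=J$ itself would make the conclusion false.
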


\begin{definition}[Tchebycheff shadow representation]
For $D=\Dom_r(A)$ define
\[
h_D(\lambda)=\inf_{y\in D}g_\lambda(y;z^+)=h_A(\lambda),
\]
and the one-dimensional shadow
\[
\operatorname{Sh}_\lambda(D;r)=\bigl[h_D(\lambda),h_r(\lambda)\bigr],
\]
empty if the lower endpoint exceeds the upper endpoint. The Tchebycheff shadow magnitude is
\[
\TSM_\rho(D;r)=\int_{\DeltaM}\mathcal H^1(\operatorname{Sh}_\lambda(D;r))\rho(\lambda)\dd\sigma(\lambda).
\]
\end{definition}

\begin{theorem}[Tchebycheff shadow representation]
For every finite approximation set $A$ and $D=\Dom_r(A)$,
\[
\TSM_\rho(D;r)=I_{R_2}(A;r).
\]
Moreover, if $\Dom_r(B)\subseteq\Dom_r(A)$, then
\[
\TSM_\rho(\Dom_r(A);r)\ge \TSM_\rho(\Dom_r(B);r).
\]
\end{theorem}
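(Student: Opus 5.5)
The plan has three steps: prove the envelope identity $h_D=h_A$, read off the length of each shadow, and then derive monotonicity from the fact that an infimum over a larger set is smaller.

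\textbf{Step 1: the envelope identity.} First I verify the identity $h_D(\lambda)=h_A(\lambda)$ that is built into the definition, since everything else rests on it.

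For the inequality $\le$, note that $A\subseteq D$ whenever each $a\in A$ satisfies $a\le r$. Points of $A$ not weakly dominating $r$ contribute empty boxes and are dropped by convention, so the infimum over $D$ is at most the minimum over $A$.

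For the inequality $\ge$, take any $y\in D$. Then $y\ge a$ coordinatewise for some $a\in A$. Each map $y_i\mapsto \lambda_i(y_i-z_i^+)$ is nondecreasing because $\lambda_i\ge0$, and the maximum of nondecreasing functions is nondecreasing. Hence $g_\lambda(y;z^+)\ge g_\lambda(a;z^+)\ge h_A(\lambda)$.

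Thus the infimum over $D$ is attained on $A$, and $h_D=h_A$.

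\textbf{Step 2: the shadow lengths give $I_{R_2}$.} For each $\lambda$, the shadow $\operatorname{Sh}_\lambda(D;r)$ is either a closed interval or empty. Its one-dimensional Hausdorff measure is therefore
\[
\mathcal H^1(\operatorname{Sh}_\lambda(D;r))=\bigl(h_r(\lambda)-h_D(\lambda)\bigr)_+ .
\]
The degenerate case of equal endpoints gives a point, which has measure zero and agrees with $(\cdot)_+$.

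Substituting $h_D=h_A$ and integrating against $\rho\,\mathrm d\sigma$ reproduces the definition of $I_{R_2}(A;r)$ term by term. This proves the first claim.

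Alternatively, one can argue through Fubini applied to the earlier theorem on exact $R_2$ improvement as scalarization-space volume. The $\lambda$-section of $\mathcal H(A;r)$ is exactly $\operatorname{Sh}_\lambda$, so $\TSM_\rho$ is the $\rho\,\mathrm d\sigma\,\mathrm dt$-volume of $\mathcal H(A;r)$. I would state this briefly to connect the two representations.

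\textbf{Step 3: monotonicity.} Now suppose $\Dom_r(B)\subseteq\Dom_r(A)$. An infimum over a larger set is no larger, so $h_{\Dom_r(A)}(\lambda)\le h_{\Dom_r(B)}(\lambda)$ for every $\lambda$. The map $s\mapsto (h_r(\lambda)-s)_+$ is nonincreasing, so the integrands are ordered pointwise. Since $\rho\ge0$, integration preserves this order and gives the stated inequality.

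\textbf{Main obstacle.} The only delicate point is measurability and integrability of $\lambda\mapsto h_A(\lambda)$ and $\lambda\mapsto h_r(\lambda)$. I handle it by noting that both are finite minima or maxima of continuous functions of $\lambda$, hence continuous on the compact simplex. Together with $\rho$ being integrable, this makes every integral above finite.

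A second point to check is the convention for points of $A$ outside the box below $r$. Here I would simply state that such points are discarded, consistent with the definition of $\Dom_r$.
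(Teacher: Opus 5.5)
Your proposal is correct and follows the paper's proof essentially step for step. Both arguments derive the envelope identity $h_D=h_A$ from coordinatewise monotonicity of $g_\lambda$, read off the shadow length $(h_r(\lambda)-h_A(\lambda))_+$ and integrate it against $\rho$, and obtain the monotonicity from the fact that an infimum over the larger set $\Dom_r(A)$ is smaller.

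Your remark about points $a\not\le r$ should be stated as an explicit hypothesis ($A$ contained in the box below $r$) rather than as a convention. The paper's proof uses $A\subseteq D$ without saying so, and the identity genuinely fails otherwise. For example, $A=\{(0,2)\}$, $r=(1,1)$, $z^+=(0,0)$ gives $D=\emptyset$ and hence $\TSM_\rho=0$, while $I_{R_2}(A;r)>0$ for uniform $\rho$.
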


The term ``shadow'' is geometric language for an ordinary integral in weight-achievement space. For each weight, the current approximation set casts an interval between its envelope value and the reference envelope. Integrating the lengths of these intervals gives exact \(R_2\) improvement. The terminology is useful because it prevents confusing this object with objective-space hypervolume.

\section{ER2I and achievement-space Gaussian surrogates}
\label{sec:er2i-surrogates}

The deterministic scalarization-shadow representation now becomes an acquisition function. The decisive modelling choice is whether one starts with a Gaussian model for the objective vector or learns Gaussian marginals directly for the scalar achievement process. The latter choice gives a particularly simple ER2I formula; the former is more faithful to a joint objective model but leads to a more complicated integrand.

Let $Y(x)$ denote the random objective vector at a candidate design $x$. Adding $Y(x)$ improves the scalarization envelope by
\[
\Delta_\lambda(Y(x);A)=\bigl(h_A(\lambda)-g_\lambda(Y(x);z^+)\bigr)_+.
\]
Hence
\[
\ERII_A(x)=\int_{\DeltaM}\E[\Delta_\lambda(Y(x);A)]\rho(\lambda)\dd\sigma(\lambda).
\]

\begin{theorem}[Achievement-space Gaussian ER2I]
Assume that the scalarized achievement process is modelled directly by Gaussian marginals
\[
Z_x(\lambda)\sim N(m_x(\lambda),s_x^2(\lambda)).
\]
Then
\[
\ERII_A(x)=\int_{\DeltaM}\EI\bigl(h_A(\lambda);m_x(\lambda),s_x(\lambda)\bigr)\rho(\lambda)\dd\sigma(\lambda),
\]
where
\[
\EI(c;\mu,s)=(c-\mu)\Phi\!\left(\frac{c-\mu}{s}\right)+s\varphi\!\left(\frac{c-\mu}{s}\right).
\]
\end{theorem}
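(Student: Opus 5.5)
The argument reduces the simplex integral to a pointwise-in-$\lambda$ identity, which is then the classical scalar Gaussian expected improvement. Starting from the representation
\[
\ERII_A(x)=\int_{\DeltaM}\E[\Delta_\lambda(Y(x);A)]\rho(\lambda)\dd\sigma(\lambda),
\]
the modelling assumption identifies, for each fixed $\lambda$, the scalar achievement $g_\lambda(Y(x);z^+)$ with the random variable $Z_x(\lambda)$. Only its marginal law enters the integrand, so no assumption on the joint law across different weights is needed. The integrand therefore becomes $\E[(h_A(\lambda)-Z_x(\lambda))_+]$, where $h_A(\lambda)$ is deterministic because $A$ is a fixed finite incumbent set.

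The next step is the scalar computation. For $Z\sim N(\mu,s^2)$ with $s>0$ and a constant $c$, write $Z=\mu+sU$ with $U\sim N(0,1)$ and set $u^*=(c-\mu)/s$. Then
\[
\E[(c-Z)_+]=\int_{-\infty}^{u^*}(c-\mu-su)\varphi(u)\dd u=(c-\mu)\Phi(u^*)+s\varphi(u^*),
\]
using $\int_{-\infty}^{u^*}u\varphi(u)\dd u=-\varphi(u^*)$. This is exactly $\EI(c;\mu,s)$ as defined in the statement. The degenerate case $s=0$ gives $(c-\mu)_+$, which is the limit of the formula as $s\downarrow 0$, so it can be included by convention. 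Applying this with $c=h_A(\lambda)$, $\mu=m_x(\lambda)$ and $s=s_x(\lambda)$ gives the integrand pointwise.

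Finally, the expectation and the simplex integral must be interchanged, and the resulting integrand must be well defined. The integrand $\Delta_\lambda\ge 0$, so Tonelli applies once the map $(\lambda,\omega)\mapsto\Delta_\lambda$ is jointly measurable. Equivalently, it suffices that $\lambda\mapsto\EI(h_A(\lambda);m_x(\lambda),s_x(\lambda))$ is measurable. Here $h_A$ is continuous, being a finite minimum of maxima of linear functions, and $\EI$ is continuous in its arguments, so measurability of $m_x$ and $s_x$ suffices; I would state this as a standing regularity assumption. I expect this measurability/interchange step to be the only real subtlety, since the statement specifies only marginals. I would handle it by defining the right-hand side directly as the integral of marginal expectations, which is legitimate because the left-hand integrand depends only on those marginals. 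Finiteness follows from $\EI(c;\mu,s)\le |c-\mu|+s$ together with integrability of $\rho$ against bounded $h_A$, $m_x$ and $s_x$ on the compact simplex.
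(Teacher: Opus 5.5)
Your proposal is correct and follows the same route as the paper: for each fixed $\lambda$ the improvement is $(h_A(\lambda)-Z_x(\lambda))_+$, its Gaussian expectation is the scalar EI closed form, and integrating against $\rho$ over $\DeltaM$ gives the result. You make explicit what the paper leaves implicit (the derivation of the EI formula, the $s=0$ convention, and measurability of the integrand); also, because ER2I is defined with the expectation already inside the simplex integral, the Tonelli interchange you discuss is not actually needed.
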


This formula is the computational payoff of the scalarization-space viewpoint. If the achievement process is modelled directly, the integrand is the classical one-dimensional Gaussian expected improvement. The remaining difficulty is integration over weights, not hypervolume decomposition in objective space.

\begin{corollary}[Variance monotonicity in achievement space]
For Gaussian achievement marginals, finite-weight ER2I and quadrature/integral ER2I are nondecreasing in every achievement standard deviation $s_x(\lambda)$.
\end{corollary}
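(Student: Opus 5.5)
The corollary follows from the achievement-space Gaussian ER2I theorem together with a one-line derivative computation for scalar Gaussian EI. The plan is to first show that, for fixed threshold $c$ and mean $\mu$, the map $s\mapsto\EI(c;\mu,s)$ is nondecreasing on $s>0$. Both finite-weight and integral ER2I are nonnegative combinations of such terms, so monotonicity then passes to them directly.

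\emph{Step 1: scalar monotonicity.} Write $u=(c-\mu)/s$. Differentiate
\[
\EI(c;\mu,s)=(c-\mu)\Phi(u)+s\varphi(u),
\]
using $\partial u/\partial s=-u/s$ and $\varphi'(u)=-u\varphi(u)$. This gives
\[
\partial_s\EI=(c-\mu)\varphi(u)(-u/s)+\varphi(u)+s(-u\varphi(u))(-u/s)=-u^2\varphi(u)+\varphi(u)+u^2\varphi(u)=\varphi(u)>0 .
\]
So EI is strictly increasing in $s$. A derivative-free alternative also works: $\EI(c;\mu,s)=\E[(c-\mu-sW)_+]$ with $W\sim N(0,1)$, which is convex in $s$. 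By symmetry of $W$ its derivative at $s=0$ is $\E[(-W)\1\{\dots\}]$, and this is handled cleanly by Jensen's inequality: $\E[(c-\mu-sW)_+]\ge(c-\mu-s\E W)_+$ together with convexity. I would present the explicit derivative and mention the convex-order view only as a remark. The limit $s\downarrow0$ gives $(c-\mu)_+$, which takes care of degenerate variances.

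\emph{Step 2: finite-weight ER2I.} Under Gaussian achievement marginals, each summand of $\ERII_{K,A}(x)$ equals
\[
\E\bigl[(h_A(\lambda^{(k)})-Z_x(\lambda^{(k)}))_+\bigr]=\EI\bigl(h_A(\lambda^{(k)});m_x(\lambda^{(k)}),s_x(\lambda^{(k)})\bigr).
\]
This is the scalar identity underlying the preceding theorem. The threshold $h_A(\lambda^{(k)})$ depends only on the incumbent set and not on $s_x$. Increasing any single $s_x(\lambda^{(k)})$ with the means held fixed therefore raises only the $k$-th term and leaves the others unchanged. A sum with positive weights $1/K$ is then nondecreasing.

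\emph{Step 3: integral ER2I and quadrature.} By the achievement-space Gaussian ER2I theorem, $\ERII_A(x)$ is the integral of the EI integrand against the nonnegative measure $\rho\,d\sigma$. Suppose $s_x\le s'_x$ pointwise on $\DeltaM$ with $m_x$ fixed. Step 1 gives the pointwise inequality between integrands, and integrating preserves it. For quadrature ER2I with nonnegative nodes and weights, the argument is identical to Step 2. The only point needing care, which I expect to be the main (mild) obstacle, is the meaning of "increasing the standard deviation $s_x(\lambda)$" for a function of $\lambda$. I would interpret it as a pointwise increase, $s_x\le s_x'$. I would also assume the integrals are finite, which holds because $\EI(c;\mu,s)\le(c-\mu)_++s\varphi(0)$ and both $h_A$ and $\rho$ are integrable on the compact simplex. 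Strictness of the inequality holds wherever the increase occurs on a set of positive $\rho\,d\sigma$-measure.
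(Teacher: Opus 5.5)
Your proposal is correct and follows the paper's proof: the key computation is $\partial_s\EI(c;\mu,s)=\varphi(u)\ge 0$ with $u=(c-\mu)/s$, and averages, quadrature sums and integrals against the nonnegative density $\rho$ then preserve this pointwise monotonicity. Your explicit assumption that the quadrature weights $w_\ell$ are nonnegative, and your pointwise reading $s_x\le s_x'$ of ``increasing $s_x(\lambda)$'', are sensible clarifications that the paper leaves implicit.
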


\begin{proposition}[Independent objective-Gaussian integrand]
If $Y_i(x)\sim N(\mu_i(x),\sigma_i^2(x))$ independently, then for interior weights
\[
\E[\Delta_\lambda(Y(x);A)]
=
\int_{-\infty}^{h_A(\lambda)}\prod_{i=1}^m
\Phi\!\left(\frac{z_i^+ + t/\lambda_i-\mu_i(x)}{\sigma_i(x)}\right)\dd t.
\]
Variance monotonicity is then conditional and can fail when the threshold lies above the scaled objective mean.
\end{proposition}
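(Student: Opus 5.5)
We prove the displayed formula with the layer-cake representation of the positive part, and then show the failure of variance monotonicity with an explicit sign analysis.

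\textbf{Integral formula.} Fix an interior weight \(\lambda\) (all \(\lambda_i>0\)) and write \(c=h_A(\lambda)\). Put \(G=g_\lambda(Y(x);z^+)=\max_i\lambda_i(Y_i(x)-z_i^+)\). For any real random variable \(G\) with finite mean,
\[
\E[(c-G)_+]=\int_{-\infty}^{c}\Prob(G\le t)\dd t .
\]
This follows by writing \((c-G)_+=\int_{-\infty}^{c}\1\{G\le t\}\dd t\) and applying Tonelli's theorem, since the integrand is nonnegative. Next compute the distribution function of the maximum. We have \(G\le t\) if and only if \(\lambda_i(Y_i-z_i^+)\le t\) for every \(i\), that is, \(Y_i\le z_i^+ + t/\lambda_i\) for every \(i\). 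Here we use \(\lambda_i>0\), which is why the weight must be interior. By independence of the coordinates,
\[
\Prob(G\le t)=\prod_i\Phi\bigl((z_i^++t/\lambda_i-\mu_i)/\sigma_i\bigr).
\]
Substituting this into the layer-cake identity gives the stated formula. The integral converges at \(-\infty\) because each factor decays like a Gaussian tail.

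\textbf{Variance monotonicity.} Differentiate under the integral with respect to \(\sigma_k\). The dominated convergence justification is routine and uses the Gaussian tails. Set \(u_i(t)=(z_i^++t/\lambda_i-\mu_i)/\sigma_i\). Then
\[
\partial_{\sigma_k}\E[\Delta_\lambda]=\int_{-\infty}^{c}\Bigl(-\tfrac{u_k(t)}{\sigma_k}\varphi(u_k(t))\Bigr)\prod_{i\ne k}\Phi(u_i(t))\dd t .
\]
The integrand is positive exactly when \(u_k(t)<0\), i.e.\ when \(t<\lambda_k(\mu_k-z_k^+)\), the scaled objective mean. If \(c\le\lambda_k(\mu_k-z_k^+)\), the integrand is nonnegative on the whole range, and monotonicity holds. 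This is the "conditional" part of the statement.

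\textbf{Counterexample to monotonicity.} To show failure, take the threshold \(c\) well above the scaled mean. Choose \(m=2\), \(z^+=0\), \(\lambda=(1/2,1/2)\), and a fixed \(\sigma_1\). Let \(\mu_2\) be very negative, so that \(\Phi(u_2)\approx1\) on the relevant range. The integral then reduces, approximately, to the one-dimensional quantity \(\int_{-\infty}^{c}\Phi\bigl((2t-\mu_1)/\sigma_1\bigr)\dd t\). By the same layer-cake identity, this equals \(\E[(c-Y_1/2)_+]\), the classical one-dimensional EI, and one-dimensional EI is itself increasing in \(\sigma\). So the one-coordinate reduction cannot produce a counterexample; the failure must come from the interaction between coordinates.

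\textbf{Main obstacle.} The hard step is therefore an explicit configuration where the derivative is negative. The most promising route is to choose \(\mu_1\) low and \(\mu_2\) moderate with \(\sigma_2\) small, so that the factor \(\Phi(u_2)\) acts as a cutoff at \(t\approx\lambda_2(\mu_2-z_2^+)<c\). Then vary \(\sigma_1\), where the \(\sigma_1\)-derivative integrand is negative for \(t>\lambda_1(\mu_1-z_1^+)\). The derivative is then approximately
\[
\int_{-\infty}^{\tau}\Bigl(-\tfrac{u_1}{\sigma_1}\varphi(u_1)\Bigr)\dd t,\qquad \tau=\min\bigl(c,\lambda_2(\mu_2-z_2^+)\bigr).
\]
Since \(-u\varphi(u)\) integrates to zero over the real line, this truncated integral is negative whenever the cutoff lies above the scaled mean \(\lambda_1(\mu_1-z_1^+)\). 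In other words, the cutoff removes the negative tail mass and keeps the positive mass on the side that dominates. I expect the main difficulty to be choosing the sign conventions so that this cutoff genuinely truncates from the correct side. If it does not, I would fall back on a numeric instance and evaluate the derivative by quadrature, which is legitimate because the statement only claims that monotonicity \emph{can} fail.
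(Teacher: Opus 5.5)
Your derivation of the integral formula and your sufficient condition \(c\le\lambda_k(\mu_k-z_k^+)\) for monotonicity in \(\sigma_k\) are exactly the paper's argument, with the differentiation written out. The formula uses the layer-cake identity, the equivalence \(G\le t\iff Y_i\le z_i^++t/\lambda_i\) for interior \(\lambda\), and factorization by independence.

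\textbf{The gap is in the counterexample.} The factor \(\Phi(u_2(t))\) is increasing in \(t\). So for small \(\sigma_2\) it tends to \(\1\{t>\tau'\}\) with \(\tau'=\lambda_2(\mu_2-z_2^+)\). It therefore removes the \emph{lower} part of the integration range, not the upper part. Your approximation with upper limit \(\tau=\min(c,\tau')\) is wrong, and the sign claim attached to it is false. Substituting \(u=u_1(t)\), \(\dd t=\lambda_1\sigma_1\dd u\), gives
\[
\int_{-\infty}^{\tau}\Bigl(-\tfrac{u_1(t)}{\sigma_1}\Bigr)\varphi(u_1(t))\dd t=\lambda_1\varphi(u_1(\tau))>0
\]
for every \(\tau\). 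This is the one-dimensional EI derivative \(\varphi\ge0\) that you had already identified. The fact that \(-u\varphi(u)\) integrates to zero over \(\R\) does not help, because every left tail \(\int_{-\infty}^{a}(-u)\varphi(u)\dd u=\varphi(a)\) is positive. As written, your construction certifies monotonicity rather than its failure.

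\textbf{With the correct cutoff the argument works.} Dominated convergence applies, since the integrand is bounded by \(|u_1|\varphi(u_1)/\sigma_1\), which is integrable on \((-\infty,c]\). As \(\sigma_2\to0\) it gives
\[
\partial_{\sigma_1}\E[\Delta_\lambda]\;\longrightarrow\;\lambda_1\int_{u_1(\tau')}^{u_1(c)}(-u)\varphi(u)\dd u=\lambda_1\bigl(\varphi(u_1(c))-\varphi(u_1(\tau'))\bigr).
\]
This limit is strictly negative whenever \(\lambda_1(\mu_1-z_1^+)\le\tau'<c\), because then \(0\le u_1(\tau')<u_1(c)\).

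So your parameter choice (\(\mu_1\) low, \(\mu_2\) moderate, \(\sigma_2\) small, threshold above both) does work once the direction is fixed, and a small \(\sigma_2>0\) yields a genuine instance. For example, take \(z^+=0\), \(\lambda=(1/2,1/2)\), \(\mu_1=\mu_2\), and \(c\) well above \(\mu_1/2\); the limit is \(\tfrac12\bigl(\varphi(u_1(c))-\varphi(0)\bigr)<0\). The underlying reason is that \(y_1\mapsto(c-\max\{\lambda_1y_1,\tau'\})_+\) has a concave kink at \(y_1=\tau'/\lambda_1\), so spreading \(Y_1\) around it lowers the expectation.

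The paper's own proof only observes that derivative regions of both signs may coexist. That alone does not show the integral ever decreases, so the corrected instance is what actually establishes the ``can fail'' clause.
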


\section{Algorithms and numerical computation}
\label{sec:algorithms}

The representation theorems now translate into algorithmic form. No claim is made that exact \(R_2\) becomes an ordinary hypervolume computation. Rather, the Tchebycheff shadow representation identifies the correct integration domain. For finite weights this gives a finite sum; for exact integral \(R_2\) it gives numerical integration over the simplex.

For a discrete weight set $\Lambda_K=\{\lambda^{(1)},\ldots,\lambda^{(K)}\}$, train one scalar GP for each achievement function $x\mapsto g_{\lambda^{(k)}}(f(x);z^+)$. The current envelope is
\[
h_{n,k}=\min_i g_{\lambda^{(k)}}(f(x_i);z^+).
\]
The acquisition is the finite sum
\[
\ERII_{K,n}(x)=\frac1K\sum_{k=1}^K\EI\bigl(h_{n,k};m_k(x),s_k(x)\bigr).
\]

\begin{proposition}[Discrete achievement-space ER2I algorithm]
A sum-based discrete $R_2$ acquisition can be implemented by updating $K$ scalar envelope values, training $K$ scalar achievement surrogates, and maximizing the average of $K$ scalar Gaussian expected improvements.
\end{proposition}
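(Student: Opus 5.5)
The proposition is essentially a correctness-and-cost statement, so the plan is to verify three things. First, the finite-sum acquisition is exactly the discrete ER2I of Section~\ref{sec:prelim} under the achievement-space Gaussian model. Second, the incumbent state is fully described by $K$ scalar envelope values. Third, each iteration needs only $K$ scalar surrogate updates and one scalar optimization.

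For correctness, start from the definition
\[
\ERII_{K,A}(x)=\frac{1}{K}\sum_{k=1}^K\E\bigl[(h_A(\lambda^{(k)})-g_{\lambda^{(k)}}(Y(x);z^+))_+\bigr].
\]
By linearity of expectation, the problem splits into $K$ independent scalar terms. Replace the vector model $Y(x)$ in the $k$-th term by the directly modelled achievement variable $Z_x(\lambda^{(k)})\sim N(m_k(x),s_k^2(x))$, which is the posterior of the $k$-th scalar GP. The $k$-th term is then the scalar Gaussian expected improvement $\EI(h_A(\lambda^{(k)});m_k(x),s_k(x))$. This is the Achievement-space Gaussian ER2I theorem specialised to the discrete measure $\rho\,\dd\sigma=\frac1K\sum_k\delta_{\lambda^{(k)}}$. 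One can also repeat the elementary computation $\E[(c-Z)_+]=(c-\mu)\Phi((c-\mu)/s)+s\varphi((c-\mu)/s)$ directly. Only the marginals at the individual weights enter, so no joint model across $k$ is needed; correlations between the $K$ GPs do not affect the expectation.

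For the envelope, take $A=\{f(x_1),\dots,f(x_n)\}$. Then $h_A(\lambda^{(k)})=\min_i g_{\lambda^{(k)}}(f(x_i);z^+)=h_{n,k}$. After evaluating $x_{n+1}$, the update is $h_{n+1,k}=\min(h_{n,k},g_{\lambda^{(k)}}(f(x_{n+1});z^+))$, which costs $O(mK)$. The new training targets for the $K$ scalar GPs are exactly these scalarized values. Hence one iteration consists of: update the $K$ envelope values, append one observation to each of the $K$ achievement datasets and refit, and maximize $\frac1K\sum_k\EI(\cdot)$.

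The main point needing care is the modelling step rather than any calculation. The identity is exact for the achievement-space model, but not for an independent objective-Gaussian model of $Y(x)$, where the Independent objective-Gaussian integrand proposition gives a different expression. I would therefore state the proposition explicitly under the hypothesis that the $k$-th surrogate's marginal is taken as the law of $g_{\lambda^{(k)}}(Y(x);z^+)$. I would also remark that, by the Variance monotonicity corollary, the resulting acquisition is nondecreasing in each $s_k(x)$. Finally, the maximization of the average of EIs is a standard continuous optimization of a smooth function, so it needs no further justification.
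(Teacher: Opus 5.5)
Your proposal is correct and follows essentially the same route as the paper's proof. For each weight the improvement is $(h_{n,k}-Z_k(x))_+$, its expectation under the scalar Gaussian achievement model is the closed-form EI, and the discrete acquisition is the average of these $K$ terms. Each envelope value is then updated by a single comparison with the new achievement value. Your additions are sound refinements rather than a different argument: linearity of expectation makes cross-weight correlations irrelevant, the update costs $O(mK)$, and the identity is exact only for the achievement-space model, not for an objective-Gaussian one.
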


A schematic pseudocode is:
\begin{enumerate}[label=\textbf{Step \arabic*.},leftmargin=2.3cm]
\item Choose weights $\Lambda_K$ and initial designs $x_1,\ldots,x_{n_0}$.
\item Evaluate $f(x_i)=(f_1(x_i),f_2(x_i))$.
\item For each weight, compute achievement targets $z_{ik}=g_{\lambda^{(k)}}(f(x_i);z^+)$ and the envelope $h_{n,k}=\min_i z_{ik}$.
\item Train scalar GP$_k$ on $(x_i,z_{ik})$ for every $k$.
\item Maximize $\ERII_{K,n}(x)$ over the design domain.
\item Evaluate the selected design, update the envelopes, and repeat.
\end{enumerate}

For exact integral $R_2$, replace the finite sum by quadrature on the simplex.

\begin{theorem}[Numerical integration form of exact ER2I]
Let $Q_L=\{(\lambda^{(\ell)},w_\ell)\}_{\ell=1}^L$ be a quadrature rule on $\DeltaM$. Under the Gaussian achievement surrogate,
\[
\widehat{\ERII}_{A,L}(x)=\sum_{\ell=1}^L w_\ell\,\EI\bigl(h_A(\lambda^{(\ell)});m_x(\lambda^{(\ell)}),s_x(\lambda^{(\ell)})\bigr)\rho(\lambda^{(\ell)})
\]
is a quadrature approximation of exact integral ER2I.
\end{theorem}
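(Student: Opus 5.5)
The claim is that $\widehat{\ERII}_{A,L}$ is a quadrature approximation of exact ER2I. I will show it is exactly the rule $Q_L$ applied to the integrand of the achievement-space ER2I representation. I will then add the consistency statement that makes ``approximation'' meaningful.

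First, I invoke the Achievement-space Gaussian ER2I theorem. Under the Gaussian achievement surrogate,
\[
\ERII_A(x)=\int_{\DeltaM}F_x(\lambda)\dd\sigma(\lambda),\qquad
F_x(\lambda)=\EI\bigl(h_A(\lambda);m_x(\lambda),s_x(\lambda)\bigr)\rho(\lambda).
\]
This is an integral of a single scalar function $F_x$ over the simplex with respect to $\sigma$. The quadrature rule $Q_L$ applied to $F_x$ is by definition $\sum_\ell w_\ell F_x(\lambda^{(\ell)})$. That sum is precisely $\widehat{\ERII}_{A,L}(x)$, so the identification is immediate once the integrand is written this way.

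Second, to give the word ``approximation'' content, I will state regularity of $F_x$ that guarantees convergence $\widehat{\ERII}_{A,L}(x)\to\ERII_A(x)$ for any convergent family of rules, for example Riemann-type rules with mesh tending to zero, or rules exact on polynomials with nonnegative weights.

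\begin{itemize}
\item $h_A$ is a finite minimum of maxima of functions linear in $\lambda$, hence piecewise linear, Lipschitz and bounded on $\DeltaM$.
\item $\EI(c;\mu,s)$ is continuous in $(c,\mu,s)$ for $s>0$, extends continuously to $s=0$ as $(c-\mu)_+$, and is $1$-Lipschitz in $c$ since $\partial_c\EI=\Phi\in[0,1]$.
\end{itemize}

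So if $m_x$, $s_x$ and $\rho$ are continuous (or merely Riemann integrable and bounded), $F_x$ is bounded and Riemann integrable. Standard quadrature theory then gives convergence. If $m_x$, $s_x$ and $\rho$ are Lipschitz, one even gets an $O(\text{mesh})$ error bound for first-order rules.

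The main obstacle is this regularity step rather than the algebra. There are two difficulties.

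\begin{itemize}
\item \emph{Boundary behaviour.} $\rho$ may blow up at the boundary of $\DeltaM$, and the weights $\lambda_i\to 0$ make $g_\lambda$ degenerate there. I would handle this by requiring $\rho\in L^1$. I would then use dominated convergence with the bound $0\le\EI\le |h_A-m_x|+s_x$, which is bounded on $\DeltaM$ because $h_A$ is, provided $m_x$ and $s_x$ are bounded.
\item \emph{Kinks of $h_A$.} The piecewise-linear kinks of $h_A$ limit higher-order accuracy. I would note that high-order rules need to be applied piecewise on the linearity cells of $h_A$, which are computable from the finite set $A$.
\end{itemize}
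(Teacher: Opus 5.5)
Your proposal is correct and follows the paper's proof. Both identify $\widehat{\ERII}_{A,L}(x)$ as the rule $Q_L$ applied to the integrand $\lambda\mapsto\EI\bigl(h_A(\lambda);m_x(\lambda),s_x(\lambda)\bigr)\rho(\lambda)$ from the achievement-space Gaussian ER2I theorem, so the error is exactly the quadrature error for that function. Your regularity discussion goes beyond what the paper proves and is sound for bounded continuous or Lipschitz data. For $\rho$ unbounded near the boundary of $\DeltaM$, however, dominated convergence only gives integrability of $F_x$ and convergence of boundary truncations, not convergence of the quadrature sums themselves; that needs an extra condition on how the nodes approach the singularity.
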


The cost of updating all envelope values after a new point is $O(Lm)$. One acquisition evaluation costs $O(L C_{\rm pred}+L)$, where $C_{\rm pred}$ is the cost of one predictive mean--variance query. Naively training one GP per quadrature node costs $O(Ln^3)$, but sparse GPs, shared kernels, multitask models, and adaptive quadrature can reduce the cost.

\section{Conclusions}
\label{sec:conclusions}
We return, finally, to the three guiding modes: computation, compatibility, and monotonicity. Table~\ref{tab:final} summarizes the results according to these modes. 

The paper has compared preference-shaped expected improvement criteria on the hypervolume and \(R_2\) sides. The main lesson is that the correct geometry matters, and this geometry should be identified before one designs algorithms. EHVI lives naturally in objective-space hypervolume geometry, and many preference transformations can be understood by asking whether they preserve this geometry after a coordinate or cone transformation. Exact integral \(R_2\), in contrast, lives naturally in scalarization-envelope geometry. Treating exact \(R_2\) as ordinary objective-space hypervolume obscures both the obstruction and the computational opportunity.

The table also indicates where the novelty lies. On the EHVI side, the novel synthesis is that product-density transformations, cone transformations, and truncation can be compared by the same three guiding modes. Product densities and cones preserve a strong link to ordinary hypervolume geometry, whereas truncation changes the uncertainty response. On the \(R_2\) side, the negative result rules out ordinary objective-space weighted hypervolume as a general representation, while the positive Tchebycheff-shadow theorem provides the correct scalarization-space representation. This in turn yields a practical computation principle for ER2I: finite sums for discrete \(R_2\) and quadrature over the weight simplex for exact integral \(R_2\).

Several directions remain open. They are mathematical as well as computational. The first is to develop efficient exact or adaptive algorithms for the simplex subdivisions induced by \(h_A(\lambda)\), especially in three objectives. The second is to study statistically consistent multi-task surrogate models in weight-achievement space, instead of independent scalar GPs per weight. The third is to connect Tchebycheff shadows more systematically with magnitude-type mixed-dimensional valuations. These directions all preserve the central distinction of the paper: hypervolume and \(R_2\) are related expected-indicator principles, but their natural geometries are different.

A further line of future work is to integrate these criteria more fully into Bayesian optimization workflows and to study their empirical performance under comparable surrogate models, preference specifications, and benchmark settings. First steps in this direction have already been taken by the work of Yang et al. on truncated expected hypervolume improvement for region-of-interest Bayesian multiobjective optimization \cite{YangLiDeutzBackEmmerich2016}, and by the D-PHI framework of Liang et al., which develops desirability-based hypervolume criteria from aspiration and reservation levels \cite{LiangShavazipourSainiEmmerich2026}. The latter translates to a weighted kernel for the hypervolume expected improvement indicator and uses the D-PHI indicator in Bayesian frameworks. Systematic comparisons of such criteria, including their computational cost, robustness to preference misspecification, and behavior in higher-dimensional objective spaces, remain an important next step.

\noindent
\textbf{Reproducibility}\\
For reproducibility and implementations, see
\begin{center}
 \href{https://github.com/emmerichmtm}{https://github.com/emmerichmtm}
\end{center}

\noindent
\textbf{Acknowledgements:}\\
This research is related to the thematic research area Decision Analytics utilizing Causal Models and Multiobjective Optimization (DEMO, jyu.fi/demo) at the University of Jyvaskyla, which was partly funded by the Academy of Finland (project 322221). 

\begin{table}[t]
\centering
\small
\setlength{\tabcolsep}{3pt}
\renewcommand{\arraystretch}{1.18}
\begin{tabular}{@{}p{3.0cm}p{4.2cm}p{3.6cm}p{3.3cm}@{}}
\toprule
Criterion & Computation & Pareto compatibility & Variance monotonicity\\
\midrule
Canonical EHVI & PHVI representation as ordinary HVI; exact algorithms transfer & yes & \textbf{dimension-free via convex order}\\
Product-density EHVI & \textbf{ordinary HVI after desirability transform} & \textbf{positive-a.e. kernels give strict compliance} & \textbf{preserved in transformed Gaussian coordinates}\\
Cone EHVI & \textbf{ordinary EHVI in cone coordinates} & \textbf{cone-order compatibility} & \textbf{variance monotone in cone coordinates}\\
Truncated EHVI & truncated-coordinate PHVI representation & fixed region-of-interest compatibility & \textbf{mean monotone but variance monotonicity may fail}\\
Discrete ER2I & \textbf{one scalar achievement GP per weight} & weight dependent; inherited from discrete \(R_2\) & \textbf{monotone for achievement-Gaussian marginals}\\
Exact ER2I & \textbf{Tchebycheff-shadow quadrature on \(\DeltaM\)} & \textbf{Pareto-compliant for integrated \(R_2\)} & \textbf{conditional for objective-Gaussian models}\\
\bottomrule
\end{tabular}
\caption{Summary of the paper's results by guiding mode. Boldface marks new theorem-level results, negative results, or unifying formulations emphasized in this paper.}
\label{tab:final}
\end{table}

\appendix
\section{List of symbols and terms}
\label{app:symbols}

\begin{center}
\small
\begin{tabular}{@{}p{0.22\textwidth}p{0.70\textwidth}@{}}
\toprule
Symbol or term & Meaning in this paper\\
\midrule
\(m\) & Number of objectives.\\
\(x\) & Design or decision vector evaluated by the black-box objectives.\\
\(f(x)=(f_1(x),\ldots,f_m(x))\) & Objective vector, written in minimization orientation unless stated otherwise.\\
\(Y(x)\) & Predictive random objective vector at candidate \(x\).\\
\(A\) & Current finite approximation set in objective space.\\
\(r\) & Dystopian or nadir-type reference point for hypervolume.\\
\(z^+\) & Utopian or ideal reference point for Tchebycheff scalarizations and \(R_2\).\\
\(\Dom_r(A)\) & Reference-bounded dominated region generated by \(A\).\\
\(\HV_r(A)\) & Hypervolume of \(A\) with respect to \(r\).\\
\(\HVI_r(\{y\},A)\) & Hypervolume improvement of a candidate objective vector \(y\).\\
\(\EHVI_A\) & Expected hypervolume improvement under the predictive distribution.\\
\(K(y)\) & Weighted hypervolume density kernel.\\
\(\WHV_{K,r}(A)\) & Weighted hypervolume of \(A\) with density \(K\).\\
\(D_j\), \(T_j\) & Coordinate-wise desirability or product-kernel transformation.\\
\(C\) & Cone defining a cone order; for simplicial cones, \(L=C^{-1}\).\\
\(\CHV_{C,r}(A)\) & Cone-based hypervolume.\\
\(\TEHVI\) & Truncated expected hypervolume improvement in a region of interest.\\
\(\Delta^{m-1}\) & Unit weight simplex \(\{\lambda\in\R_+^m:\sum_i\lambda_i=1\}\).\\
\(\lambda\) & Weight vector for Tchebycheff scalarization.\\
\(\rho(\lambda)\) & Nonnegative weight density on the simplex.\\
\(g_\lambda(y;z^+)\) & Weighted Tchebycheff achievement value \(\max_i\lambda_i(y_i-z_i^+)\).\\
\(h_A(\lambda)\) & Scalarization envelope \(\min_{a\in A}g_\lambda(a;z^+)\).\\
\(R_2(A)\) & Exact integral \(R_2\) value \(\int h_A(\lambda)\rho(\lambda)\dd\sigma(\lambda)\).\\
\(I_{R_2}(A;r)\) & Deterministic \(R_2\)-improvement relative to \(r\).\\
\(\ERII_A(x)\) & Expected \(R_2\)-indicator improvement of candidate \(x\).\\
\(\mathcal H(A;r)\) & Scalarization-space shadow region between \(h_A\) and \(h_r\).\\
\(\operatorname{Sh}_\lambda(D;r)\) & Tchebycheff shadow interval at weight \(\lambda\).\\
\(\TSM_\rho(D;r)\) & Tchebycheff shadow representation of exact integral \(R_2\) improvement.\\
\(\EI(c;\mu,s)\) & Scalar Gaussian expected improvement at threshold \(c\), mean \(\mu\), and standard deviation \(s\).\\
\(\Phi,\varphi\) & Standard normal distribution function and density.\\
\bottomrule
\end{tabular}
\end{center}

\paragraph{Terminology.}
\emph{Objective-space hypervolume} refers to volume of dominated regions in the original objective coordinates. \emph{Scalarization-space volume} refers to integration over \((\lambda,t)\), where \(\lambda\) is a weight and \(t\) is an achievement level. \emph{Tchebycheff shadow} is the geometric interval between the current scalarization envelope and the reference envelope. It is analytically an ordinary integral in weight-achievement space, but conceptually it is useful because it distinguishes exact \(R_2\) improvement from objective-space hypervolume.

\section{Proofs for canonical and preference-shaped EHVI}
\label{app:ehvi-proofs}

\begin{proof}[Proof sketch of the PHVI representation]
For independent predictive coordinates, the expected measure of a rectangular contribution factorizes into a product of one-dimensional expected-improvement terms. The deterministic HVI contribution can be decomposed into signed unions of axis-aligned boxes. Applying Fubini's theorem to each box replaces each coordinate interval by a one-dimensional Gaussian tail integral. These tail integrals are precisely the coordinate-wise EI transforms
\[
\tilde r_j=\EI(0;\mu_j,\sigma_j^2),
\qquad
\tilde a_j^{(i)}=\tilde r_j-\EI(a_j^{(i)};\mu_j,\sigma_j^2).
\]
The inclusion--exclusion pattern of the boxes is unchanged; only the coordinates are transformed. Hence the original expected HVI equals the ordinary HVI of the transformed point set and transformed reference point.
\end{proof}

\begin{proof}[Proof of canonical EHVI mean and variance monotonicity]
Let $I_A(y)$ denote deterministic hypervolume improvement. For ordinary Pareto hypervolume it is coordinate-wise monotone in the improvement direction and convex in each candidate coordinate on every cell of the orthogonal decomposition induced by the incumbent set. The PHVI representation transfers these properties to the transformed coordinates. Improving the predictive mean shifts the one-dimensional EI transforms in the favourable direction, so the ordinary HVI in transformed space cannot decrease.

For variance monotonicity, fix the mean and increase one predictive standard deviation. A centered normal distribution with larger variance dominates one with smaller variance in the convex order. Since the one-dimensional contribution entering the HVI representation is a convex positive-part expectation, the transformed coordinate is nondecreasing in the predictive standard deviation. Monotonicity of ordinary HVI in the transformed coordinates then implies nondecreasing EHVI. Repeating this argument coordinate by coordinate gives the dimension-free statement.
\end{proof}

\begin{proof}[Proof of the positive product-density theorem]
Assume $K(y)=\prod_{j=1}^m k_j(y_j)$ with $k_j\ge0$, $k_j\in L^1$ on the relevant coordinate interval, and $k_j>0$ almost everywhere. Define
\[
T_j(t)=\int_{\ell_j}^{t}k_j(s)\dd s,\qquad T(y)=(T_1(y_1),\ldots,T_m(y_m)).
\]
The map $T$ is coordinate-wise nondecreasing and strictly increasing in the measure-theoretic sense: every interval of positive length is mapped to an interval of positive length. For every dominated box $B=\prod_j[a_j,r_j]$,
\[
\int_B K(y)\dd y
=\prod_{j=1}^m\int_{a_j}^{r_j}k_j(t)\dd t
=\prod_{j=1}^m\bigl(T_j(r_j)-T_j(a_j)\bigr),
\]
which is the ordinary hypervolume of the transformed box. By finite additivity over the standard box decomposition of $\Dom_r(A)$, the weighted hypervolume of $A$ equals ordinary hypervolume of $T(A)$ with reference $T(r)$. Therefore expected weighted hypervolume improvement is ordinary EHVI in transformed coordinates. Strict Pareto compliance follows because a positive-volume improvement region has positive $K$-measure when $K>0$ almost everywhere; variance monotonicity follows by applying the canonical EHVI result to a Gaussian model posed in transformed coordinates.
\end{proof}

\begin{proof}[Proof of the cone hypervolume proposition]
Let $C$ be a simplicial cone generated by the columns of an invertible matrix $C$ and let $L=C^{-1}$. By definition, $y'$ is dominated from $y$ under the cone order precisely when $y'-y\in C\R_+^m$. Applying $L$ gives
\[
L(y'-y)\in\R_+^m,
\]
which is the ordinary orthant order in transformed coordinates. Thus the cone-dominated region generated by $A$ and bounded by $r$ is mapped by $L$ to the ordinary dominated region generated by $LA$ and bounded by $Lr$. The change-of-variables formula gives the volume factor $|\det C|$, because $|\det L|=1/|\det C|$ depending on the direction in which the region is transformed. With the convention used in the statement,
\[
\CHV_{C,r}(A)=|\det C|\,\HV_{Lr}(LA).
\]
Expected improvement follows by applying the same deterministic transformation inside the predictive expectation.
\end{proof}

\begin{proof}[Proof of truncated-EHVI claims]
For independent truncated coordinates, the conditional density on a fixed box $[a,b]$ factorizes into one-dimensional truncated Gaussian densities. The same Fubini argument as in the PHVI representation therefore applies, with one-dimensional truncated EI transforms replacing ordinary Gaussian EI transforms. This yields a PHVI-type representation in truncated coordinates.

For fixed truncation bounds, a favourable shift of the predictive mean improves the one-dimensional truncated EI transform in the improvement direction, so mean monotonicity is preserved. Variance monotonicity is different: the conditional distribution is renormalized on a fixed box. Increasing variance may move probability mass away from the part of the box that contributes improvement and may reduce the normalized density in the useful region. A one-dimensional counterexample embedded as a coordinate slice of the multiobjective problem therefore shows that fixed-box TEHVI is not variance-monotone in general.
\end{proof}

\section{Proof of the weighted-kernel admissibility proposition}
\label{app:kernel-proof}

\begin{proof}
If $K\in L^1(\Omega_r)$ and $K>0$ almost everywhere, then $\WHV_{K,r}(A)$ is finite because $\Dom_r(A)\subseteq\Omega_r$. If $\Dom_r(B)\subsetneq\Dom_r(A)$ and the improvement region
\[
S=\Dom_r(A)\setminus\Dom_r(B)
\]
has positive Lebesgue measure, then
\[
\WHV_{K,r}(A)-\WHV_{K,r}(B)
=
\int_S K(y)\dd y>0,
\]
because a nonnegative integrable function that is positive almost everywhere has positive integral on every measurable set of positive measure. Hence the weighted hypervolume is strictly Pareto-compliant in the reference-bounded domain.

Conversely, strict Pareto compliance first implies ordinary Pareto monotonicity. If $K<0$ on a set of positive measure, regularity of the Lebesgue integral and the differentiation theorem give an axis-aligned box $Q=\prod_i[\alpha_i,\beta_i]\subseteq\Omega_r$ with $\int_QK(y)\dd y<0$. Let $a=(\alpha_1,\ldots,\alpha_m)$ and
\[
b^{(i)}=(\alpha_1,\ldots,\alpha_{i-1},\beta_i,\alpha_{i+1},\ldots,\alpha_m),
\quad i=1,\ldots,m.
\]
Set $A=\{a\}$ and $B=\{b^{(1)},\ldots,b^{(m)}\}$. Then $\Dom_r(B)\subseteq\Dom_r(A)$ and, up to a null boundary,
\[
\Dom_r(A)\setminus\Dom_r(B)=Q.
\]
Thus
\[
\WHV_{K,r}(A)-\WHV_{K,r}(B)=\int_QK(y)\dd y<0,
\]
contradicting even ordinary Pareto monotonicity. Therefore $K\ge0$ almost everywhere. If $K=0$ on a measurable set $E\subseteq\Omega_r$ of positive measure, then an admissible positive-measure improvement region contained in $E$ receives zero additional value,
\[
\int_E K(y)\dd y=0,
\]
contradicting strict Pareto compliance in the measure-theoretic sense stated in the proposition. Hence $K>0$ almost everywhere. Finiteness on the full reference box gives $K\in L^1(\Omega_r)$.

For a product density $K(y)=\prod_j k_j(y_j)$ with nonnegative marginals, $K>0$ almost everywhere on the product domain if and only if every factor $k_j$ is positive almost everywhere on its coordinate interval. This gives the marginal-kernel condition.
\end{proof}

\section{Proofs for exact \texorpdfstring{$R_2$}{R2}, magnitude, and Tchebycheff shadows}

\begin{proof}[Proof of scalarization-space volume theorem]
By the layer-cake identity,
\[
(h_r(\lambda)-h_A(\lambda))_+
=
\int_0^{h_r(\lambda)}\1\{h_A(\lambda)\le t\}\dd t.
\]
Multiplying by $\rho(\lambda)$ and integrating over $\DeltaM$ gives
\[
I_{R_2}(A;r)
=
\int_{\DeltaM}\int_0^{h_r(\lambda)}
\1\{h_A(\lambda)\le t\}\rho(\lambda)\dd t\dd\sigma(\lambda),
\]
which is exactly the weighted measure of
\[
\mathcal H(A;r)=\{(\lambda,t):h_A(\lambda)\le t\le h_r(\lambda)\}.
\]
\end{proof}

\begin{proof}[Proof of the no-objective-space-WHV theorem]
Consider two objectives, $z^+=(0,0)$, $r=(1,1)$, and $A_c=\{(1,c)\}$ with $0<c<1$. The ordinary dominated region is
\[
\Dom_r(A_c)=[1,1]\times[c,1],
\]
which has two-dimensional Lebesgue measure zero. Hence every integrable objective-space density gives $\WHV_{K,r}(A_c)=0$.

For $\lambda\in[0,1]$,
\[
g_\lambda(r;0)=\max\{\lambda,1-\lambda\},
\qquad
g_\lambda((1,c);0)=\max\{\lambda,(1-\lambda)c\}.
\]
On $0\le\lambda\le c/(1+c)$ we have $g_\lambda((1,c);0)=(1-\lambda)c$ and $g_\lambda(r;0)=1-\lambda$. Hence
\[
g_\lambda(r;0)-g_\lambda((1,c);0)=(1-\lambda)(1-c)>0
\]
on a set of positive measure. Therefore $I_{R_2}(A_c;r)>0$, contradicting any equality with ordinary objective-space weighted hypervolume valid for all finite sets.
\end{proof}

\begin{proof}[Proof that magnitude removes the zero-volume obstruction]
With $r=(1,1)$ and $A_c=\{(1,c)\}$, $0<c<1$, the dominated region is the vertical segment
\[
D=[1,1]\times[c,1].
\]
Its two-dimensional area is zero, so ordinary hypervolume is zero. In the reduced magnitude expansion for rectangular sets in two dimensions, the one-dimensional projection terms contribute. The projection of $D$ onto the first coordinate has length zero, whereas the projection onto the second coordinate has length $1-c$. Thus the reduced magnitude contribution is positive, equal to $(1-c)/2$ under the standard two-dimensional normalization. Hence magnitude detects a lower-dimensional contribution that ordinary hypervolume misses.
\end{proof}

\begin{proof}[Proof of the no-magnitude-bijection theorem]
In two objectives with $z^+=(0,0)$, $r=(1,1)$, and uniform weights, take
\[
A=\{(0,1)\}.
\]
Its reduced magnitude is $1/2$, and its exact $R_2$ improvement is $1/4$. Now let $B=\{(s,s)\}$ with $s=3-\sqrt 6$ and write $p=1-s=\sqrt6-2$. The dominated box of $B$ has side lengths $p,p$, so its reduced magnitude is
\[
p+\frac{p^2}{4}=\frac12.
\]
Thus $A$ and $B$ have the same reduced magnitude. However,
\[
g_\lambda((s,s);0)=s\,g_\lambda(r;0),
\]
and therefore
\[
I_{R_2}(B;r)
=
(1-s)\int_0^1\max\{\lambda,1-\lambda\}\dd\lambda
=
(\sqrt6-2)\frac34
\neq \frac14=I_{R_2}(A;r).
\]
Thus exact integral $R_2$ improvement is not a function of standard reduced magnitude alone.
\end{proof}

\begin{proof}[Proof of the Tchebycheff shadow theorem]
For $D=\Dom_r(A)$, monotonicity of $g_\lambda$ implies
\[
h_D(\lambda)=\inf_{y\in D}g_\lambda(y;z^+)=\min_{a\in A}g_\lambda(a;z^+)=h_A(\lambda).
\]
Therefore
\[
\mathcal H^1(\operatorname{Sh}_\lambda(D;r))=(h_r(\lambda)-h_A(\lambda))_+.
\]
Integrating over $\lambda$ gives $\TSM_\rho(D;r)=I_{R_2}(A;r)$. If $\Dom_r(B)\subseteq\Dom_r(A)$, then $h_{\Dom_r(A)}(\lambda)\le h_{\Dom_r(B)}(\lambda)$ for all $\lambda$, proving Pareto monotonicity after integration with the nonnegative density $\rho$.
\end{proof}

\section{Proofs for ER2I formulas}

\begin{proof}[Proof of achievement-space Gaussian ER2I]
For fixed $\lambda$, adding candidate $x$ changes the envelope to $\min\{h_A(\lambda),Z_x(\lambda)\}$, so the improvement is $(h_A(\lambda)-Z_x(\lambda))_+$. If $Z_x(\lambda)$ is Gaussian with mean $m_x(\lambda)$ and standard deviation $s_x(\lambda)$, its expectation is the scalar Gaussian EI formula. Integration over $\DeltaM$ gives the theorem.
\end{proof}

\begin{proof}[Proof of variance monotonicity in achievement space]
For $s>0$,
\[
\EI(c;\mu,s)=(c-\mu)\Phi(u)+s\varphi(u),
\qquad u=\frac{c-\mu}{s}.
\]
Differentiating with respect to $s$ gives
\[
\frac{\partial}{\partial s}\EI(c;\mu,s)=\varphi(u)\ge0.
\]
Thus each achievement-weight contribution is nondecreasing in its predictive standard deviation. Finite sums, quadrature sums, and integrals with nonnegative weight density preserve this monotonicity.
\end{proof}

\begin{proof}[Proof of independent objective-Gaussian integrand and conditional variance statement]
Use the layer-cake identity
\[
\E[(c-X)_+]=\int_{-\infty}^c\Prob(X\le t)\dd t
\]
with $X=g_\lambda(Y(x);z^+)$. The event $g_\lambda(Y(x);z^+)\le t$ is equivalent to $Y_i(x)\le z_i^+ + t/\lambda_i$ for all interior-weight coordinates. Independence factorizes the probability into the product of one-dimensional Gaussian distribution functions, proving the formula.

For variance monotonicity, changing one objective standard deviation differentiates one Gaussian distribution factor in the product. The sign of this derivative depends on the standardized threshold $z_i^+ + t/\lambda_i-\mu_i(x)$. If the integration threshold $h_A(\lambda)$ lies below the scaled mean contribution for the varied coordinate, the derivative has the favourable sign over the integration range and monotonicity holds. If this condition is violated, positive and negative derivative regions may coexist, so the integral need not be monotone. This proves the conditional nature of the objective-Gaussian variance result.
\end{proof}

\begin{proof}[Proof of the discrete achievement-space ER2I algorithm proposition]
For each weight $\lambda^{(k)}$, the current best achievement level is
\[
h_{n,k}=\min_i g_{\lambda^{(k)}}(f(x_i);z^+).
\]
A candidate $x$ improves this scalar level by
\[
(h_{n,k}-Z_k(x))_+.
\]
Under the scalar Gaussian predictive model $Z_k(x)\sim N(m_k(x),s_k^2(x))$, the expected improvement is the closed-form scalar EI. Since the discrete $R_2$ acquisition is the average over the $K$ weights, the acquisition is the average of these $K$ scalar EI values. Updating $h_{n,k}$ after a new observation only requires comparing the old value with the new achievement value at that weight. This gives the stated algorithmic implementation.
\end{proof}

\begin{proof}[Proof of numerical quadrature theorem]
The exact achievement-space ER2I is an integral of the scalar EI integrand over $\DeltaM$. Replacing this integral by any quadrature rule $Q_L=\{(\lambda^{(\ell)},w_\ell)\}$ gives the stated approximation. The error is the quadrature error for the function $\lambda\mapsto\EI(h_A(\lambda);m_x(\lambda),s_x(\lambda))\rho(\lambda)$.
\end{proof}

\end{document}